\newcommand{\email}{}
\definecolor{labelkey}{rgb}{0,0.08,0.45}
\definecolor{refkey}{rgb}{0,0.6,0.0}
\definecolor{Brown}{rgb}{0.45,0.0,0.05}
\definecolor{dgreen}{rgb}{0.00,0.49,0.00}
\definecolor{dblue}{rgb}{0,0.08,0.75}
\renewcommand{\leq}{\ensuremath{\leqslant}}
\renewcommand{\geq}{\ensuremath{\geqslant}}
\renewcommand{\le}{\ensuremath{\leqslant}}
\renewcommand{\ge}{\ensuremath{\geqslant}}
\newcommand{\minimize}[2]{\ensuremath{\underset{\substack{{#1}}}%
{\text{\rm minimize}}\;\;#2 }}
\newcommand{\scal}[2]{{\left\langle{{#1}\mid{#2}}\right\rangle}}
\newcommand{\Menge}[2]{\left\{{#1}~\Big|~{#2}\right\}}
\newcommand{\HH}{\ensuremath{{\mathsf H}}}
\newcommand{\Sum}{\ensuremath{\displaystyle\sum}}
\newcommand{\emp}{\ensuremath{{\varnothing}}}
\newcommand{\Id}{\ensuremath{\operatorname{I}}}
\newcommand{\RR}{\ensuremath{\mathbb{R}}}
\newcommand{\defspace}{\ensuremath{\RR^n}}
\newcommand{\RP}{\ensuremath{\left[0,+\infty\right[}}
\newcommand{\RPP}{\ensuremath{\left]0,+\infty\right[}}
\newcommand{\RX}{\ensuremath{\left]-\infty,+\infty\right]}}
\newcommand{\NN}{\ensuremath{\mathbb N}}
\newcommand{\dom}{\ensuremath{\text{\rm dom}\,}}
\newcommand{\prox}{\ensuremath{\text{\rm prox}}}
\newcommand{\proj}{\ensuremath{\text{\rm proj}}}
\newcommand{\card}{\ensuremath{\text{\rm card}}}
\newcommand{\inte}{\ensuremath{\text{\rm int}}}
\newcommand{\Ndist}{\ensuremath{\mathcal{N}}}
\newcommand{\sympart}[1]{\ensuremath{ #1^{\rm s}}}
\DeclarePairedDelimiter\norm\lVert\rVert
\DeclarePairedDelimiter\innerproduct\langle\rangle
\newcommand{\ip}[2]{\innerproduct{#1 \vert #2}}
\DeclareMathOperator*{\argmin}{argmin}
\DeclareMathOperator\Gra{Gra}
\newcommand{\Jac}{\ensuremath{\operatorname{J}}}
\newcommand{\opsymbol}{\ensuremath{F}}
\newcommand{\linmodel}{\ensuremath{{\opsymbol}^{\rm lin}}}
\newcommand{\Faff}{\ensuremath{{\opsymbol}^{\rm aff}}}
\newcommand{\op}{\ensuremath{\opsymbol_{\theta}}}
\newcommand{\monop}{\ensuremath{\op^{\rm{mon}}}}
\newcommand{\nmonop}{\ensuremath{\op^{\rm{nom}}}}
\newcommand{\linop}{\ensuremath{\op^{\rm{lin}}}}
\newcommand{\monopsym}{\widetilde{\opsymbol}^{\rm{mon}}_{\theta}}
\newcommand{\linopsym}{\widetilde{\opsymbol}^{\rm{lin}}_{\theta}}
\newcommand{\F}{\opsymbol}
\newcommand{\Flin}{\linmodel}
\newcommand{\Flinsym}{\ensuremath{\widetilde{\opsymbol}^{\rm lin}}}
\newcommand{\best}[1]{\boldsymbol{#1}}
\newcommand{\zoomedinimagebis}[2]{%
  \begin{tikzpicture}
    \node[inner sep=0pt, anchor=north west] (image) at (0,0) {\includegraphics[width=0.245\textwidth]{#1}};
    \node[inner sep=0pt, anchor=north west] (zoomed) at (0,0) {%
      \includegraphics[width=0.05\textwidth, clip, trim=#2]{#1}
    };
  \end{tikzpicture}%
}
\newtheorem{theorem}{Theorem}[section]
\newtheorem{lemma}[theorem]{Lemma}
\newtheorem{proposition}[theorem]{Proposition}
\theoremstyle{plain}{\theorembodyfont{\rmfamily}%
}
\theoremstyle{plain}{\theorembodyfont{\rmfamily}%
}
\theoremstyle{plain}{\theorembodyfont{\rmfamily}%
\newtheorem{algorithm2}[theorem]{Algorithm}}
\theoremstyle{plain}{\theorembodyfont{\rmfamily}%
\newtheorem{problem}[theorem]{Problem}}
\theoremstyle{plain}{\theorembodyfont{\rmfamily}%
}
\theoremstyle{plain}{\theorembodyfont{\rmfamily}%
\newtheorem{remark}[theorem]{Remark}}
\theoremstyle{plain}{\theorembodyfont{\rmfamily}%
\newtheorem{definition}[theorem]{Definition}}
\theoremstyle{plain}{\theorembodyfont{\rmfamily}%
}
\numberwithin{equation}{section}
\begin{document}

\title{\sffamily Learning Truly Monotone Operators\\
with Applications to Nonlinear Inverse Problems}
\author{
Younes Belkouchi$^*$, Jean-Christophe Pesquet$^*$, Audrey Repetti$^{\dagger\star}$, Hugues Talbot$^*$
\\[5mm]
\small $^*$ CentraleSupelec, Inria, Universit\'e Paris-Saclay  \\
\small $^\dagger$ School of Mathematics and Computer Sciences, Heriot-Watt University, Edinburgh, UK\\
\small $^\star$ Maxwell Institute for Mathematical Sciences, Bayes Centre, Edinburgh, UK \\
\small Emails: \email{younes.belkouchi@centralesupelec.fr}, \email{jean-christophe.pesquet@centralesupelec.fr}, \\
\small \email{a.repetti@hw.ac.uk}, and \email{hugues.talbot@centralesupelec.fr}
}
\date{}

\maketitle
\thispagestyle{empty}

\vskip 8mm

\begin{abstract}
This article introduces a novel approach to learning monotone neural networks through a newly defined penalization loss. The proposed method is particularly effective in solving classes of variational problems, specifically monotone inclusion problems, commonly encountered in image processing tasks. The Forward-Backward-Forward (FBF) algorithm is employed to address these problems, offering a solution even when the Lipschitz constant of the neural network is unknown. Notably, the FBF algorithm provides convergence guarantees under the condition that the learned operator is monotone.

Building on plug-and-play methodologies, our objective is to apply these newly learned operators to solving non-linear inverse problems. To achieve this, we initially formulate the problem as a variational inclusion problem. Subsequently, we train a monotone neural network to approximate an operator that may not inherently be monotone. Leveraging the FBF algorithm, we then show simulation examples where the non-linear inverse problem is successfully solved. 
\end{abstract}

{\bfseries Keywords.}
Monotone operator, Optimization, Inverse Problem, Deep learning, Forward-Backward-Forward, Plug and Play (PnP)

{\bfseries MSC.}
47H05, 47H04, 47H10, 49N45



\section{Introduction}\label{Sec:introduction}

In many image processing tasks, the objective is to solve a variational problem of the form
\begin{equation}\label{e:refmin}
\minimize{x\in C}{g(x)+h(x)}
\end{equation}
where 
$C$ is a nonempty closed convex subset of $\RR^{n}$, 
$g\colon \RR^n\to \RX$, and $h\colon \RR^n\to \RX$ are functions
which may have different mathematical properties and various interpretations.
This problem appears especially in inverse imaging problems when using Bayesian inference methods to define a maximum \textit{a posteriori} (MAP) estimate from degraded measurements \cite{calvetti_inverse_2018}. 
Basic choices for $g$ and $h$ are 
\begin{equation}\label{e:IRproblem}
(\forall x \in \RR^n)\qquad
g(x) = \|W x\|_p^p, \quad 
h(x) = \frac12 \| Hx - y\|^2,
\end{equation}
where $W\in \RR^{q\times n}$ and $H\in \RR^{m\times n}$.
Here, vector $y\in \RR^m$ corresponds to measurements corrupted by an additive white Gaussian noise, $H$ represents a linear model for the acquisition process (e.g., convolution, under-sampled Fourier, or Radon transform), $W$ is a well-chosen linear operator mapping the image $x$ to a transformed domain (e.g., wavelet transform), and $\| \cdot \|_p$ is an $\ell_p$ norm, with $p\in [1, +\infty]$.
When $p=2$, \eqref{e:IRproblem} allows us to recover a least squares formulation with a  Tikhonov regularization \cite{benning_modern_2018, calvetti_inverse_2018}, whereas  $p=1$ promotes sparse solutions to the inverse problem.
Both $H$ and $W$ may be unknown, or approximately known, and may require learning strategies to achieve high reconstruction quality.

In recent decades, proximal splitting methods \cite{combettes_proximal_2011, combettes_fixed_2021} have been extensively used to address large-scale convex or nonconvex variational problems that encompass constraints, both differentiable and nondifferentiable functions, as well as linear operators. 
Multiple classes of proximal algorithms have been designed to tackle various forms of optimization problems. Among these algorithms, one can cite first-order algorithms such as the proximal point algorithm, Douglas-Rachford algorithm \cite{combettes_douglasrachford_2007}, forward-backward (FB) algorithm (also known as proximal gradient algorithm) \cite{bauschke_convex_2017, combettes_proximal_2011}, or the forward-backward-forward (FBF) algorithm (also known as Tseng's algorithm) \cite{tseng_modified_2000}. The first two mentioned algorithms allow us to handle non-smooth functions, through their proximity operators, while the last two algorithms mix
gradient steps (i.e., explicit or forward steps) with proximal steps (i.e., implicit or backward steps). 
While both FB and FBF algorithms are capable of minimizing the sum of a differentiable function and a non-differentiable function, the former requires the differentiable function to have a Lipschitz-continuous gradient, which is not needed when using Tseng's algorithm. 

Lately, proximal algorithms have further gained attention for tackling inverse imaging problems, as their efficiency has been improved when paired with powerful neural networks (NNs). These hybrid methods consist in replacing some of the steps in the proximal algorithm by a NN that has been trained for a specific task, leading to so-called ``plug-and-play" (PnP) methods. Traditionally, the intuition behind PnPs was to see the proximity operator as a \textit{Gaussian denoiser} associated with the regularization term to compute the MAP estimate. This denoiser can be handcrafted (for example, BM3D) \cite{almeida_blind_2013} or learned (that is, NN) \cite{cohen_regularization_2020, fermanian_pnp-reg_2023, hurault_gradient_2022, hurault_proximal_2022, pesquet_learning_2021, ryu_plug-and-play_2019}. 
However, recently, a few studies studies began incorporating different types of NNs into proximal algorithms. These NNs may be trained for various tasks (e.g., inpainting \cite{tang_data-driven_2023}) or used to replace steps in the algorithm other than the proximity operator \cite{al_2022_bregman, bouman_generative_2023, hurault_gradient_2022, hurault_convergent_2023}. 
For example, the NN could be used to replace the gradient step \cite{hurault_gradient_2022}. In this work, the authors learn a denoiser that is not constrained and use Proximal Gradient Descent (PGD) as the minimization algorithm. The gradient step is modeled by using a denoising NN, which then allowed them to solve various image restoration tasks by plugging the denoiser into PGD, while having a good performance. Although the method involves a modified PGD with a backtracking step search, it does not yield the true minimum of the considered objective function since the modeled prior is non-convex. As such, converging to a critical point is guaranteed (as is the best case in most non-convex settings).
On the other hand, the same authors have proposed to learn Bregman proximal operators, which can simplify some computations in order to handle measurement corrupted with Poisson noise \cite{al_2022_bregman, hurault_convergent_2023}. 
Interestingly, the variational problem offers a more general setting, namely maximally monotone operator (MMO) theory, to enlarge the parameter space of the NN in a constrained setting, while still offering global convergence guarantees to an optimal solution.
Indeed, most proximal algorithms are grounded on MMO theory, in the sense that convergence of the iterates can be investigated for solving monotone inclusions, instead of variational problems \cite{combettes_fixed_2021}. In this context, the authors in \cite{garcia2023primal, pesquet_learning_2021, terris_enhanced_2021} proposed to learn the resolvent of a maximally monotone operator, i.e., a firmly-nonexpansive operator. 
Finally, it can be noted that deep equilibrium methods exhibit similarities with PnP algorithms \cite{bolte2024differentiating, gilton2021deep, kamilov2023plug}. They are often based on proximal fixed point equations, where some operators are replaced by NNs. These are trained based on fixed point properties and subsequently used in an iterative process. In~\cite{gilton2021deep}, the authors proposed to replace either a gradient step or a proximal step by a network. The authors in \cite{kamilov2023plug} highlighted that a deep equilibrium architecture can be obtained by running PnP iteration until convergence and using the implicit differentiation at the fixed point.

Motivated by the MMO approach developed 
in \cite{pesquet_learning_2021},  we propose to generalize~\eqref{e:refmin} to monotone inclusion problems. 
First, it can be obviously noticed that the constrained
optimization problem \eqref{e:refmin} can be rewritten as
\begin{equation}\label{e:refmingen}
\minimize{x\in \RR^n}{g(x)+h(x)+ \iota_C(x)}
\end{equation}
where $\iota_C$ denotes the indicator function of $C$
(see definition at the end of this section). 
Then, assuming that $g$ and $h$ are proper convex and 
lower semi-continuous functions, under suitable qualification conditions, $\widehat{x} \in \RR^n$ is a solution  to \eqref{e:refmingen}
if and only if it is the solution to the variation inclusion
\begin{equation}\label{e:difinc}
0 \in \partial g(\widehat{x})+\partial h(\widehat{x})
+N_{C}(\widehat{x}),
\end{equation}
where $N_C = \partial \iota_C$ is the normal cone to $C$. 
Consequently, solving \eqref{e:difinc} is a special case of the following monotone inclusion problem
where we have specified our working assumptions.
\begin{problem}\label{prob:main}
Let $C$ be a closed convex set of $\RR^n$ with a nonempty interior. Let $h\colon \RR^n\to \RR$ be a proper lower-semicontinuous convex function. Assume that $h$ is continuously differentiable on $C\subset \inte(\dom h)$.
Let $A$ be a monotone continuous operator defined on $C$. We want to 
\begin{equation}\label{e:incmain}
\text{find } \widehat{x}\in \RR^n \text{ such that }
0 \in A(\widehat{x})+\partial h(\widehat{x})+N_{C}(\widehat{x}).
\end{equation}
\end{problem}
Since \eqref{e:refmin} is an instance of Problem~\ref{prob:main} under suitable conditions, the latter provides a more general formulation, as sub-differentials of convex functions are particular cases of monotone operators. Hence, rather than being restricted to a variational model, Problem~\ref{prob:main} enlarges the problem framework to incorporate monotone operators. 
For instance, $A$ does not have to be the gradient of any function. We hence move from the Bayesian formulation of inverse problem to a monotone inclusion framework. 
Note that the main difference between \cite{pesquet_learning_2021} and this work is that the former approach is based on Minty's theorem which characterizes a maximally monotone operator through its resolvent. In \cite{pesquet_learning_2021}, the focus in terms of modeling and learning was therefore placed on the resolvent operator of $A$.
In contrast, in this work, we model and learn directly 
$A$, a ``true" monotone operator. 
To this aim, we develop a new approach based on monotone operator theory, that enables training networks with the desirable monotone property.
We note that learning directly the monotone operator has a few advantages over learning the resolvent of a maximally monotone operator (as proposed in~\cite{pesquet_learning_2021}). 
First, the ``power" of a network that approximates a monotone operator can explicitly be tuned by using a multiplicative factor in front of it. This not only enables the introduction of a regularization parameter in front of $A$ in~\eqref{e:incmain}, but also, in first-order methods, the use of a step size that will not change the limit point of the associated PnP iterations. This is not the case when learning a resolvent operator (see~\cite{garcia2023primal, pesquet_learning_2021}).
Second, with the proposed approach, the monotone operator can directly be evaluated, hence the value of~\eqref{e:incmain} can also be evaluated. Instead, when learning a resolvent operator, the operator of interest does have a theoretical expression, but in practice it would be necessary to invert the NN to evaluate it.
Third, the proposed monotone approach enables the use of varying PnP algorithms other than the standard PnP iterations. In the current work, we  propose a PnP version of Tseng's algorithm. Its use in this context is fully novel, to the best of our knowledge.
Finally, imposing monotonicity is less restrictive in the network design than imposing firm non-expansiveness, as it is needed for learning resolvent operators. Hence, the proposed monotone approach can potentially be used for solving different problems than with the resolvent operator approach. We will illustrate this last statement in our simulation section, where we will consider a nonlinear inverse problem.

Recently, monotone operator theory has found its way in the study of normalizing flows. These generative methods focus on modeling complex probability distributions using simple and known distributions (e.g. Gaussian) using NN \cite{kingma_improving_2017, rezende_variational_2016}. 
One of the major properties of these NNs is invertibility, as most models suppose that a tractable diffeomorphism exists between the mappings involving the variables of both densities, and as such, the inverse model can be computed or estimated using different methods. Invertibility was usually imposed by enforcing the non nullity of the determinant of the Jacobian of the mapping, either through re-parametrization or carefully chosen loss functions \cite{kruse_benchmarking_2021}, recent studies have focused on imposing strong monotonicity as a surrogate to invertibility \cite{ahn_invertible_2022, chaudhari_learning_2023}.
The authors of \cite{ahn_invertible_2022} impose monotonicity using the \textit{Cayley} operator associated with a given operator. They rely on the property stating that the Cayley operator of a monotone mapping is nonexpansive, hence enforces this condition through spectral normalization and newly defined 1-Lipschitz activations. 
Instead, the authors of \cite{chaudhari_learning_2023} propose two new NN architectures, namely the \textit{Cascaded Network} and the \textit{Modular Network}. Both architectures are monotone, and model the gradient field of a convex function. 
The main differences between these works and our approach is that, while the authors of \cite{ahn_invertible_2022, chaudhari_learning_2023} must impose some conditions on the architecture of their networks, our approach is agnostic to the network architecture, provided that the learning process converges to an acceptable solution. Further, since 
we aim to learn any monotone operator, which may or may not be a gradient operator, 
their proof of monotonicity can be viewed as a special case of the one provided later in this paper.

Lastly, we choose to tackle  an original nonlinear inverse problem in the context of image restoration, as an application of Problem~\ref{prob:main}, which constitutes a different problem from normalizing flows.
Note that, although many works have been dedicated to linear inverse problems, the investigation of nonlinear degradation models is more scarce \cite{colibazzi2023deep}.

In this article, our main focus lies on scenarios in which the operator $A$ is unknown. To address this challenge, our methodological contribution to the field is two-fold. First, we tailor an existing algorithm to effectively tackle the inclusion Problem~\ref{prob:main}. Second, we introduce a comprehensive framework that harnesses NNs to learn and replace the monotone operator $A$, leveraging their universal approximation capabilities. Our contributions encompass both algorithmic adaptation and the establishment of a novel approach for monotone operator learning through NNs. 
In particular, we introduce a new penalization function that can be used with usual optimizers (e.g., SGD, Adam, etc.) during the training process.
To demonstrate the potential of our approach, we define and analyze a non-linear inverse problem which is modeled as a monotone inclusion problem, and we show that we are able to solve it using a learned monotone operator.

This article is structured as follows. In Section~\ref{Sec:problemstatement}, we introduce Tseng's algorithm and adapt it to solve Problem~\ref{prob:main}. In Section~\ref{Sec:proposedapproach}, we propose a PnP framework based on Tseng's iterations. In this section, we also relate monotonicity of NNs to the study of their Jacobian. We introduce the associated penalization that will be used during the training process to learn monotone NNs. 
In Section~\ref{Sec:simulations},
we 
formulate a non-linear inverse problem as a variational inclusion problem, and solve it using the presented tools. Lastly, conclusions are drawn in Section~\ref{Sec:conclusions}.

\smallskip
\paragraph{Notation} 
$(\defspace, \norm{ \cdot })$ is a Euclidean space, where $ \norm{ \cdot }$ is the $\ell_2$ norm and  $\ip{\cdot}{\cdot}$ is the associated inner product.
Let $S\subset \RR^n$, the interior of $S$ is denoted by $\inte S$.
An operator $A: \defspace \rightrightarrows 2^{\defspace}$ is a set-valued map if it maps every $x \in \defspace$ to a subset $A(x) \subset \defspace$. 
$A$ is single valued if, for every $x \in \defspace$,  $\card A(x) = 1$, in which case we consider that $A$ 
can be identified with an application from
$\defspace$ to $\defspace$.
The \textit{graph} of the set valued operator $A$ is defined as $\Gra A = \{ (x, u)\in (\defspace)^2 \mid u \in A(x)\}$. 
The reflected operator of $A$ is defined as $R_A = 2A - \Id$.

The Moreau subdifferential of a convex function $f$ is a set valued operator denoted as $\partial f$. 
If $f$ is differentiable, then $\partial f$ is single valued, in which case $\nabla f$ refers to its gradient: $(\forall x \in \defspace)$ $\partial f(x) = \{ \nabla f(x) \}$. In addition, if $f$ is proper and lower-semicontinuous, the proximity operator 
of $f$ associates to every $x\in \defspace$ the unique point
$p\in \defspace$ such that $x-p \in \partial f(p)$.
 The indicator function $\iota_C$ of a subset $C$ of $\defspace$ is defined as:
$(\forall x \in \defspace)$
$\iota_C(x) = 0$ if $x \in C$, and $\iota_C(x) = +\infty$ otherwise.
The normal cone to a set $C \subset \defspace$ at $x\in \defspace$ is defined as $N_C(x) = \{ u \in \defspace \mid (\forall y \in \defspace) \scal{y - x}{u} \leq 0\}$ if $x \in C$, and $N_C(x) = \emp$ otherwise.
In particular, if $C$ is a nonempty closed convex set,
$\partial \iota_C = N_C$ and
$\prox_{\iota_C}= \proj_C$ is the projector onto $C$.

Let $A\colon \defspace \rightrightarrows 2^{\defspace}$. Then $A$ is a \textit{monotone} (resp. \textit{strictly monotone}) operator if $(\forall (x,u) \in \Gra A)(\forall (y,v) \in \Gra A) \quad \scal{u - v}{x - y} \geq 0$ (resp. $ \scal{u - v}{x - y} > 0)$ if $x\neq y$). 
Furthermore, $A$ is a $\beta$-\textit{strongly monotone} operator, with constant $\beta>0$, if $(\forall (x,u)\in \Gra A)(\forall (y,v) \in \Gra A) \quad \scal{u - v}{x - y} \geq \beta \|x-y\|^2 $. 
As a limit case, 
a monotone operator can be considered as $0$-strongly monotone. We say that $A$ is \textit{maximally} monotone if there exists no monotone operator $B\neq A$ such that $\Gra A \subset \Gra B$. When $f \colon \defspace \to ]-\infty, +\infty]$ is convex, proper, and lower semi-continuous, then $\partial f$ is maximally monotone. A is $\beta$-cocoercive with constant $\beta>0$, if $(\forall (x,u)\in \Gra A)(\forall (y,v) \in \Gra A) \quad \scal{u - v}{x - y} \geq \beta \|u-v\|^2 $.

Let $T\colon \defspace \to \defspace$ be a single valued operator, and $x \in \RR^n$. If $T$ is Fr\'echet differentiable, we denote by $\Jac_T(x) \in \RR^{n\times n}$ the Jacobian of $T$ at $x$ which is defined as the matrix that verifies
$\lim_{\|h\| \to 0, h\neq 0} \frac{\|T(x+h) - T(x) - \Jac_T(x)h\|}{\|h\|} = 0$. If $T$ is Fr\'echet differentiable, then $T$ is G\^ateaux differentiable and, for every $h\in\defspace$ $\Jac_T(x)h = \lim_{t \to 0,t\neq 0} \frac{T(x+th)-T(x)}{t}$. 
Further, we define the symmetric part of its Jacobian operator as $\Jac^{\rm s}_T=(\Jac_T + \Jac_T^\top)/2$. 

Let $M \in \RR^{n \times n}$ be a symmetric matrix. We denote by $\lambda_{\min}(M)$ the smallest eigenvalue of $M$, and by $\overline{\lambda}_{\max}(M)$ the 
maximum absolute value of the eigenvalues of $M$.
Let $(M_1, M_2) \in (\RR^{n \times n})^2$. The Loewner order $M_1 \succ M_2$ (resp. $M_1 \succeq M_2$) is defined as, for every $u\in \defspace\setminus\{0\}$, $u^\top M_1 u > u^\top M_2 u$ (resp. $u^\top M_1 u \geq u^\top M_2 u$).
Then $M$ is positive definite (resp. positive semidefinite) if and only if $M \succ 0$ (resp. $ M \succeq 0$). An alternative definition is that $M$ is positive definite (resp. positive semidefinite) if and only if $\lambda_{\min}(M)>0$ (resp. $\lambda_{\min}(M) \geq 0$).

For further background on convex optimization and MMO theory, we refer the reader to \cite{bauschke_convex_2017}.


\section{Tseng's algorithms for monotone inclusion problems}\label{Sec:problemstatement}
\label{Sec:Tseng}

The algorithm we propose in this work is grounded on the original work by Tseng in \cite{tseng_modified_2000}. 
In this section, we first recall the relevant background, and then give our 
modified version of Tseng's algorithm for solving \eqref{e:incmain}.

\subsection{Forward-Backward-Forward strategy}
\label{Ssec:Tseng:simple}

The following monotone inclusion problem is considered
in \cite{tseng_modified_2000}.
\begin{problem}\label{prob:difincT}
Let $C$ be a nonempty closed convex subset of $\RR^n$. 
Let $f\colon \RR^n\to ]-\infty, + \infty]$ be a proper, lower-semicontinuous, convex function, and let $B$ be a maximally monotone operator, continuous on $\dom \partial f \subset C$. We want to 
\begin{equation}\label{e:difincT}
\text{find } \widehat{x}\in C
\text{ such that }
0 \in B(\widehat{x})+\partial f(\widehat{x}).
\end{equation}
\end{problem}

Tseng proposed to solve Problem~\ref{prob:difincT} using the following algorithm.
\begin{algorithm2}\label{a:FBFgen}
Let $x_{0}\in \dom B$ and $(\gamma_k)_{k\in \NN}$ be a sequence in $[0,+\infty[$.
\begin{equation}\label{e:TsengalgoGeneral}
\begin{array}{l}
\text{For } k = 0, 1, \ldots, \\
\left\lfloor
\begin{array}{l}
b_k = B(x_k) \\
z_{k} = \prox_{\gamma_k f}(x_k-\gamma_k b_k) \\
x_{k+1} = \proj_{C}\big(z_{k}-\gamma_{k} \big(B (z_{k})- b_k\big)\big).
\end{array}
\right.
\end{array}
\end{equation}
\end{algorithm2}
This method, sometimes also called forward-backward-forward (FBF) algorithm, is reminiscent of extragradient methods. The step-size can be determined using Armijo-Goldestein rule, defined below.
\begin{definition}[Armijo-Goldstein rule]\label{def:armijo}
Let $\sigma \in \RPP$ and $(\beta,\theta) \in ]0,1[^2$.
Let $(x_k)_{k\in \NN}$ be a sequence generated by Algorithm~\ref{a:FBFgen}.
At every iteration $k\in \NN$,
the stepsize $\gamma_{k}>0$ is chosen such that $\gamma_k = \sigma \beta^{i_k}$ where 
\begin{equation}
    i_k = \inf \left\{ i \in \NN \, \left| \,
\begin{array}{l}
\gamma =\sigma \beta^i\\
 \gamma \| B( z_k ) -B(x_{k})\| \le \theta \| z_k - x_{k}\|\\
 z_k =\prox_{\gamma f} \big( x_k-\gamma B(x_k) \big)
\end{array}
\right.
\right\}.
\end{equation}

\end{definition}

We have the following convergence result \cite[Theorem 3.4]{tseng_modified_2000}.
\begin{proposition} \label{prop:convFBF}
Consider Problem~\ref{prob:difincT}.
Let $(x_{k})_{k\in \NN}$ be a sequence generated by Algorithm~\ref{a:FBFgen}. 
Assume that 
\begin{enumerate}
\item\label{prop:convFBFi} 
there exists a solution to the problem;
\item\label{prop:convFBFiii} $B+\partial f$ is maximally monotone; 
\item\label{prop:convFBFiv} for every $x\in C$,
\begin{equation}\label{e:defvarphi}
\varrho(x) = \inf_{w\in B(x)+\partial f(x)} \|w\|
\end{equation}
 is locally bounded;
\item\label{prop:convFBFv} 
$(\gamma_k)_{k\in \NN}$ satisfies the rule given by Definition~\ref{def:armijo}.

\end{enumerate}
Then $(x_{k})_{k\in \NN}$ converges to a solution to 
the problem.
\end{proposition}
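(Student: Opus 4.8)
The plan is to follow Tseng's original argument \cite{tseng_modified_2000}, recast in the present notation, in three stages: a fixed-point characterization of solutions, a Fejér-type energy inequality driven by the Armijo rule, and a limiting argument identifying cluster points as solutions.

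First I would record the fixed-point characterization. From the definition of the proximity operator, $z_k = \prox_{\gamma_k f}(x_k-\gamma_k b_k)$ is equivalent to $\gamma_k^{-1}(x_k-z_k)-b_k \in \partial f(z_k)$, while any solution $\widehat x$ of \eqref{e:difincT} satisfies $-B(\widehat x)\in\partial f(\widehat x)$; hence $\widehat x$ is a solution if and only if $\widehat x = \prox_{\gamma f}(\widehat x-\gamma B(\widehat x))$ for one (equivalently every) $\gamma>0$. I would also check that the line search of Definition~\ref{def:armijo} terminates after finitely many trials at each $k$, so that $(x_k)_{k\in\NN}$ is well defined: as $\gamma\downarrow 0$ the trial point $z(\gamma)=\prox_{\gamma f}(x_k-\gamma B(x_k))$ tends to $x_k$, with $\norm{z(\gamma)-x_k}$ of order $\gamma$, while continuity of $B$ makes $\norm{B(z(\gamma))-B(x_k)}$ tend to $0$; thus the ratio $\gamma\norm{B(z(\gamma))-B(x_k)}/\norm{z(\gamma)-x_k}$ vanishes and the test $\gamma\norm{B(z(\gamma))-B(x_k)}\leq\theta\norm{z(\gamma)-x_k}$ is eventually met.

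The core step is the energy inequality. Fix a solution $\widehat x$ and set $y_k = z_k-\gamma_k\big(B(z_k)-b_k\big)$, so that $x_{k+1}=\proj_C(y_k)$. Using $\gamma_k^{-1}(x_k-z_k)-b_k\in\partial f(z_k)$ together with $-B(\widehat x)\in\partial f(\widehat x)$, the monotonicity of $\partial f$, and the monotonicity of $B$ evaluated at $z_k$ and $\widehat x$, I would derive the inner-product inequality $\scal{x_k-y_k}{z_k-\widehat x}\geq 0$. Splitting $x_k-\widehat x=(x_k-z_k)+(z_k-\widehat x)$ and expanding $\norm{y_k-\widehat x}^2$ around $x_k$ then yields
\[
\norm{y_k-\widehat x}^2 \leq \norm{x_k-\widehat x}^2 - \norm{x_k-z_k}^2 + \gamma_k^2\norm{B(z_k)-B(x_k)}^2 .
\]
Since $\widehat x\in C$ and $\proj_C$ is nonexpansive, $\norm{x_{k+1}-\widehat x}\leq\norm{y_k-\widehat x}$, and the Armijo condition $\gamma_k\norm{B(z_k)-B(x_k)}\leq\theta\norm{z_k-x_k}$ with $\theta\in\,]0,1[$ gives the contraction
\[
\norm{x_{k+1}-\widehat x}^2 \leq \norm{x_k-\widehat x}^2 - (1-\theta^2)\norm{x_k-z_k}^2 .
\]
As the solution set is nonempty by assumption (i), this makes $(x_k)_{k\in\NN}$ Fejér monotone with respect to it; hence the sequence is bounded, $\norm{x_k-\widehat x}$ converges for every solution $\widehat x$, and summation forces $\norm{x_k-z_k}\to 0$.

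Finally I would identify the limit. Boundedness gives a subsequence $x_{k_j}\to x^\star$, and $\norm{x_k-z_k}\to 0$ gives $z_{k_j}\to x^\star$. The delicate point is to pass to the limit in $\gamma_{k_j}^{-1}(x_{k_j}-z_{k_j})-B(x_{k_j})\in\partial f(z_{k_j})$. This is exactly where the local boundedness of $\varrho$ in \eqref{e:defvarphi} and the maximal monotonicity of $B+\partial f$ enter: local boundedness of the residual near $x^\star$ prevents the Armijo step sizes from collapsing to zero, so $\gamma_{k_j}$ stays bounded below and the quotient $\gamma_{k_j}^{-1}(x_{k_j}-z_{k_j})$ vanishes; combined with $B(x_{k_j})\to B(x^\star)$ by continuity and with the closedness of the graph of the maximally monotone operator $B+\partial f$, this yields $0\in B(x^\star)+\partial f(x^\star)$, i.e. $x^\star$ is a solution. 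A standard Fejér argument then upgrades subsequential to full convergence: since $x^\star$ is a solution, $\norm{x_k-x^\star}$ is nonincreasing by the energy inequality and tends to $0$ along the subsequence, hence tends to $0$. The main obstacle is precisely this last stage — ruling out $\gamma_{k_j}\to 0$ through the local boundedness hypothesis and justifying the passage to the limit in the subdifferential inclusion — which is where the technical assumptions are genuinely used.
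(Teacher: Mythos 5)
The paper does not actually prove this proposition: it is stated as a direct import of \cite[Theorem 3.4]{tseng_modified_2000}, so there is no internal argument to compare against. Your reconstruction of Tseng's proof is essentially faithful and the core of it is correct: the characterization $\gamma_k^{-1}(x_k-z_k)-b_k\in\partial f(z_k)$, the inequality $\scal{x_k-y_k}{z_k-\widehat x}\geq 0$ obtained by adding the monotonicity inequalities for $\partial f$ and $B$, the resulting bound $\norm{x_{k+1}-\widehat x}^2\leq\norm{x_k-\widehat x}^2-(1-\theta^2)\norm{x_k-z_k}^2$, and the Fej\'er argument are all exactly Tseng's Lemma 3.2/Theorem 3.4 machinery.

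Two points in your last stage deserve correction. First, the claim that $\norm{z(\gamma)-x_k}$ is ``of order $\gamma$'' (needed both for finite termination of the line search and implicitly later) requires justification: the upper bound $\norm{z(\gamma)-x_k}\leq\gamma\varrho(x_k)$ follows from nonexpansiveness of the prox, but the matching lower bound comes from the monotonicity of $\gamma\mapsto\norm{z(\gamma)-x_k}/\gamma$, which you do not mention and which is the actual lemma Tseng invokes. Second, and more substantively, the assertion that local boundedness of $\varrho$ ``prevents the Armijo step sizes from collapsing to zero'' is not how hypothesis \ref{prop:convFBFiv} is used. Under mere continuity of $B$ (no local Lipschitz assumption), one cannot rule out $\gamma_{k_j}\to 0$; Tseng's proof splits into the two cases $\liminf_j\gamma_{k_j}>0$ and $\gamma_{k_j}\to 0$. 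In the first case one passes to the limit in the subdifferential inclusion via graph-closedness of the maximally monotone operator $B+\partial f$, as you describe. In the second case the failed Armijo test at step size $\gamma_{k_j}/\beta$, combined with the bound $\norm{\tilde z_{k_j}-x_{k_j}}\leq(\gamma_{k_j}/\beta)\,\varrho(x_{k_j})$ furnished by the local boundedness of $\varrho$ and with the continuity of $B$, forces the prox-gradient residual ratio to vanish, and one concludes $0\in B(x^\star)+\partial f(x^\star)$ directly. So the hypothesis is used to handle vanishing step sizes, not to exclude them; as written, your argument has a gap in that second case.
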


A strength of this algorithm is that it does not require the operator $B$ to be 
cocoercive as in the standard FB algorithm.
In the particular case when $B$ is $\beta$-Lipschitzian with $\beta \in \RPP$, the stepsize $\gamma_{k}$ can be chosen such that $\inf_{k\in \NN}\gamma_{k}>0$ and $\sup_{k\in \NN}\gamma_k < 1/\beta$,
which allows the use of a constant stepsize \cite{tseng_modified_2000}. Unfortunately, this implies estimating $\beta$, which may be difficult.
For example, it is known that a standard neural network with nonexpansive activation functions is Lipschitz continuous, but 
its Lipschitz constant itself is unknown and needs to be estimated. 
In this context, computing a tight estimate of
the Lipschitz constant for such a network is generally an NP-hard problem \cite{combettes_lipschitz_2020}.

\subsection{An instance of Tseng's algorithm}
\label{Ssec:Tseng:mono}

In this section, we propose to use the results presented in Section~\ref{Ssec:Tseng:simple} to derive an algorithm for solving Problem~\ref{prob:main}.
The proposed algorithm is given below.
\begin{algorithm2}\label{a:FBF}
Let $x_{0}\in C$ and $(\gamma_k)_{k\in \NN}$ be a sequence in $]0, +\infty[$.
\begin{equation}\label{e:TsengalgoBase}
\begin{array}{l}
\text{For } k = 0, 1, \ldots, \\
\left\lfloor
\begin{array}{l}
a_{k} = A (x_{k})+\nabla h(x_{k})\\
z_{k} = \proj_{C}(x_{k}-\gamma_{k} a_{k})\\
x_{k+1}= \proj_{C}\big(z_{k}-\gamma_{k} (A(z_{k})+\nabla h(z_{k})-a_{k})\big).
\end{array}
\right.
\end{array}
\end{equation}
\end{algorithm2}
The convergence of Algorithm~\ref{a:FBF} can then be deduced from Proposition~\ref{prop:convFBF}, which yields the following result.
\begin{proposition}\label{p:conv}
Consider Problem~\ref{prob:main}.
Let $(x_k)_{k\in \NN}$ be generated by Algorithm~\ref{a:FBF}.
Assume that 
\begin{enumerate}
\item\label{p:convi} there exists a solution to Problem~\ref{prob:main};

\item\label{p:convii} for every $k\in \NN$, $\gamma_k$ satisfies the Armijo rule given in Definition~\ref{def:armijo}, for $B=A + \nabla h$, and $f = \iota_C$.
\end{enumerate}
Then $(x_{k})_{k\in \NN}$ converges to a solution to Problem~\ref{prob:main}, belonging to $\dom A$.
\end{proposition}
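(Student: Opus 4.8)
The plan is to recognise Algorithm~\ref{a:FBF} as the instance of Algorithm~\ref{a:FBFgen} obtained by setting $B=A+\nabla h$ and $f=\iota_C$, and then to invoke Proposition~\ref{prop:convFBF}. First I would record the identities $\partial\iota_C=N_C$ and $\prox_{\gamma\iota_C}=\proj_C$ for $\gamma>0$; substituting these into \eqref{e:TsengalgoGeneral} reproduces \eqref{e:TsengalgoBase} line by line, the backward step becoming a projection onto $C$ and the two forward evaluations using $A+\nabla h$. Likewise \eqref{e:difincT} becomes \eqref{e:incmain}, so that a solution of Problem~\ref{prob:main} is exactly a solution of the corresponding instance of Problem~\ref{prob:difincT}. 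I would also note that every iterate remains in $C=\dom(A+\nabla h)$ (since $x_0\in C$ and each update is a projection onto $C$), so the forward steps are well defined.

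Next I would check the structural assumptions on $B=A+\nabla h$: it is monotone, being the sum of the monotone operator $A$ and the monotone operator $\nabla h$ (the gradient of the convex function $h$), and it is continuous on $\dom\partial\iota_C=C$, since $A$ is continuous on $C$ and $h$ is continuously differentiable there. It then remains to verify the four hypotheses of Proposition~\ref{prop:convFBF}. Hypotheses \ref{prop:convFBFi} and \ref{prop:convFBFv} are precisely hypotheses \ref{p:convi} and \ref{p:convii}. For the local boundedness in \ref{prop:convFBFiv}, I would use that $0\in N_C(x)$ for every $x\in C$, whence $\varrho(x)=\inf_{w\in A(x)+\nabla h(x)+N_C(x)}\norm{w}\leq\norm{A(x)+\nabla h(x)}$; the right-hand side is continuous on $C$, hence locally bounded, which gives \ref{prop:convFBFiv}.

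The crux is hypothesis \ref{prop:convFBFiii}, the maximal monotonicity of $B+\partial f=A+\nabla h+N_C$, and I expect this to be the main obstacle: since $A+\nabla h$ is only defined on $C$, it is not by itself maximally monotone on $\defspace$ when $C\neq\defspace$, so the usual sum theorem does not apply directly. My plan is to argue through Minty's characterization. The operator $M=A+\nabla h+N_C$ is monotone, and it is maximally monotone if and only if $\ran(\Id+M)=\defspace$. Fixing $y\in\defspace$, surjectivity amounts to finding $x\in C$ with $y-x-A(x)-\nabla h(x)\in N_C(x)$, i.e.\ to solving the variational inequality $\scal{x+A(x)+\nabla h(x)-y}{z-x}\geq 0$ for all $z\in C$. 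The map $x\mapsto x+A(x)+\nabla h(x)-y$ is continuous on $C$ and strongly monotone with modulus $1$ (the $x$ term supplies the modulus, while $A$ and $\nabla h$ are monotone), so this variational inequality admits a unique solution by the standard existence theory for continuous strongly monotone variational inequalities on a closed convex set. This establishes $\ran(\Id+M)=\defspace$, hence the maximal monotonicity of $M$. Alternatively, one may reach the same conclusion by writing $\nabla h+N_C=\partial(h+\iota_C)$, which is valid since $C\subset\inte(\dom h)$, and combining known maximal monotonicity results for sums involving a continuous monotone operator.

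With all four hypotheses in place, Proposition~\ref{prop:convFBF} yields convergence of $(x_k)_{k\in\NN}$ to a solution. Finally, because the iterates lie in the closed set $C=\dom A$ and the limit $\widehat{x}$ satisfies \eqref{e:incmain} (which requires $A(\widehat{x})$ to be defined), the limit belongs to $\dom A$, completing the proof.
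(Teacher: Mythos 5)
Your proposal is correct, and it reaches the conclusion by a genuinely different route on the one nontrivial step. The paper handles the maximal monotonicity of $A+\nabla h+N_C$ by first invoking the existence of a maximally monotone extension $\widetilde{A}$ of $A$ to all of $\RR^n$ and then applying the Rockafellar-type sum rule twice (to $\widetilde{A}+\partial h$, and then to $B+\partial\iota_C$, using the interiority conditions $C\subset\inte(\dom h)$ and $\inte C\neq\emp$). You instead keep $B=A+\nabla h$ defined only on $C$ and prove maximality of the sum directly via Minty's criterion, solving for each $y$ the strongly monotone continuous variational inequality $\scal{x+A(x)+\nabla h(x)-y}{z-x}\geq 0$ on $C$; in finite dimensions this is a clean, self-contained argument and arguably more elementary than appealing to the abstract extension theorem (which rests on Zorn's lemma) plus the sum rule, at the cost of invoking existence theory for variational inequalities. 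The remaining verifications (the identification $\prox_{\gamma\iota_C}=\proj_C$, monotonicity and continuity of $A+\nabla h$ on $C$, and the bound $\varrho(x)\leq\|A(x)+\nabla h(x)\|$ from $0\in N_C(x)$) coincide with the paper's. One small caveat: Problem~\ref{prob:difincT} as stated in the paper asks for $B$ itself to be maximally monotone, which your un-extended $B=A+\nabla h$ is not when $C\neq\defspace$; this is precisely why the paper introduces $\widetilde{A}$. Since the substantive hypothesis actually used in Proposition~\ref{prop:convFBF} is the maximal monotonicity of $B+\partial f$ together with monotonicity and continuity of $B$ on $\dom\partial f$, your argument is sound in substance, but to fit the framework literally as stated you would either need to note that Tseng's theorem only requires these weaker properties of $B$, or perform the same extension step as the paper.
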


\begin{proof}
According to \cite[Theorem 20.21]{bauschke_convex_2017}, there exists a maximally monotone extension $\widetilde{A}$ of $A$ on $\RR^n$, and \eqref{e:incmain} has the form of \eqref{e:difincT} with
$f= \iota_{C}$ and $B= \widetilde{A}+\partial h$. 
Then $f$ is proper, lower semi-continuous, and convex. Further, since $h$ is also proper, lower semi-continuous, and convex, then $\partial h$ is maximally monotone. 
Since $\widetilde{A}$ is maximally monotone and $\dom \widetilde{A} \cap \inte(\dom \partial h) \supset \dom A \cap \inte(\dom \partial h)
= C \cap \inte(\dom \partial h) \supset C \cap \inte(\dom h) = C \neq \emp$, then $B$ is maximally monotone \cite[Corollary 25.5(ii)]{bauschke_convex_2017}. 
In addition, $B=\nabla h + \widetilde{A} = \nabla h +A$ is continuous on $\dom \partial f = C$. 

We have $\dom B = \dom \widetilde{A}\cap \dom(\partial h) \supset \dom A \cap \inte(\dom h) = C$
and thus $\inte(\dom B) \cap \dom \partial f \supset \inte(C) \neq \emp$.
Consequently, Assumption \ref{prop:convFBFiii} in Proposition~\ref{prop:convFBF} holds \cite[Corollary 25.5(ii)]{bauschke_convex_2017}. 

Let $\varrho$ be defined on $C$ by \eqref{e:defvarphi}.
We have 
\begin{align}
(\forall x \in C) \quad \varrho(x) &=\inf_{t\in N_{C}(x)} \|B(x)+t\|\nonumber\\
& \le \|B(x)\| + \inf_{t\in N_{C}(x)} \|t\|\nonumber\\
& \le \|B(x)\| + \|x-\proj_{C}x\| \nonumber\\
& = \|B(x)\|.
\end{align}
Since we have seen that $B$ is continuous on $C$, $\varrho$ is locally bounded on $C$
and Assumption \ref{prop:convFBFiv} in Proposition~\ref{prop:convFBF}
is satisfied.

The convergence result thus follows from Proposition~\ref{prop:convFBF} by noticing 
that\linebreak $\prox_{\gamma_{k} f} = \proj_{C}$.
\end{proof}


\section{Proposed method using a monotone NN} 
\label{Sec:proposedapproach}

The objective of this work is to develop a PnP version of the proposed Tseng's algorithm described in Section~\ref{Ssec:Tseng:mono}. 
To this aim, we will approximate the operator $A$ in Problem~\ref{prob:main} by a monotone neural network $\op$, with learned parameters $\theta $ belonging to some set $\Theta$. Then, the modified form of Algorithm~\ref{a:FBF}
is given below.
\begin{algorithm2}\label{a:FBF-pnp}
Let $x_{0}\in C$ and $(\gamma_k)_{k\in \NN}$ be a sequence in $]0, +\infty[$.
\begin{equation}\label{e:Tsengalgo-NN}
\begin{array}{l}
\text{For } k = 0, 1, \ldots, \\
\left\lfloor
\begin{array}{l}
a_{k} = \op (x_{k})+\nabla h(x_{k})\\
z_{k} = \proj_{C}(x_{k}-\gamma_{k} a_{k})\\
x_{k+1}= \proj_{C}\big(z_{k}-\gamma_{k} (\op(z_{k})+\nabla h(z_{k})-a_{k})\big).
\end{array}
\right.
\end{array}
\end{equation}
\end{algorithm2}
In this section, we will describe how to build such a monotone neural network.

\subsection{Properties of differentiable monotone operators}

As described \linebreak in Proposition~\ref{p:conv}, to ensure convergence of a sequence $(x_k)_{k\in \NN}$ generated by Algorithm~\ref{a:FBF-pnp}, the NN $\op$ must be monotone and continuous on $C$. 
As most standard neural networks are continuous, especially those that use non-expansive activation function \cite{pesquet_learning_2021, combettes_lipschitz_2020}, we only need to focus on ensuring monotonicity of $\op$. 

Before providing a characterization of (strongly) monotone operators, let us state the following algebraic property.
The proof is skipped
due to its simplicity. 

\begin{lemma}\label{lem:eigenvalues}
    Let $M \in \RR^{N \times N}$ be a symmetric matrix,
    let $\rho \in [\overline{\lambda}_{\max}(M),+\infty[$,  and let $M' = \rho \Id - M $.
    Then
    \begin{equation}\label{e:lambdamin}
        \lambda_{\min}(M)
        = \rho
        - \overline{\lambda}_{\max}(M').
    \end{equation}
\end{lemma}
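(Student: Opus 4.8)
The plan is to reduce the statement to simple eigenvalue bookkeeping by exploiting the fact that $M$ and $M' = \rho\Id - M$ share the same eigenvectors. First I would invoke the spectral theorem: since $M$ is symmetric, there is an orthonormal basis diagonalizing it, with real eigenvalues $\lambda_1 \leq \cdots \leq \lambda_N$, so that $\lambda_{\min}(M) = \lambda_1$ and $\overline{\lambda}_{\max}(M) = \max_i |\lambda_i|$. In that same basis, $M'$ acts on the $i$-th eigenvector by the scalar $\rho - \lambda_i$, so the spectrum of $M'$ is exactly $\{\rho - \lambda_i\}_{1 \leq i \leq N}$.

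The key observation is a sign control: the hypothesis $\rho \geq \overline{\lambda}_{\max}(M)$ means $\rho \geq |\lambda_i|$ for every $i$, hence in particular $\lambda_i \leq \rho$, so each eigenvalue $\rho - \lambda_i$ of $M'$ is nonnegative, i.e. $M' \succeq 0$. Consequently the absolute values in the definition of $\overline{\lambda}_{\max}(M')$ are inactive, and one has $\overline{\lambda}_{\max}(M') = \max_i |\rho - \lambda_i| = \max_i (\rho - \lambda_i)$.

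To conclude I would commute the maximum with the subtraction, using $\max_i (\rho - \lambda_i) = \rho - \min_i \lambda_i = \rho - \lambda_{\min}(M)$. Rearranging this identity gives $\lambda_{\min}(M) = \rho - \overline{\lambda}_{\max}(M')$, which is precisely \eqref{e:lambdamin}.

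The whole argument is elementary, as the authors note; the only point needing the slightest care is the removal of the absolute values in $\overline{\lambda}_{\max}(M')$, which is exactly where the nonnegativity of the spectrum of $M'$ intervenes. Note that $\rho \geq \overline{\lambda}_{\max}(M) \geq \lambda_{\max}(M)$, so in fact the weaker condition $\rho \geq \lambda_{\max}(M)$ would already suffice to force $M' \succeq 0$ and hence the identity; the stated hypothesis, which equals the spectral norm of $M$, is simply a convenient and readily computable choice of shift.
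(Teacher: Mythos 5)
Your proof is correct and is exactly the elementary spectral argument the authors had in mind when they wrote that the proof is ``skipped due to its simplicity'': diagonalize $M$, observe that the hypothesis on $\rho$ makes $M'$ positive semidefinite so the absolute value in $\overline{\lambda}_{\max}(M')$ is inactive, and commute the max with the subtraction. Your closing remark that the weaker hypothesis $\rho \geq \lambda_{\max}(M)$ already suffices is also accurate and correctly identifies $\overline{\lambda}_{\max}(M)$ as merely a convenient computable choice (via power iteration, as the paper later exploits).
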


The following conditions will be leveraged later to enforce monotonicity of the network during its training.

\begin{proposition}\label{prop:jacobianmonotony}
    Let $T \colon \RR^n \to \RR^n$ be G\^{a}teaux 
    differentiable, and let, for every $x \in \RR^n$, $\Jac_T(x) $ be the Jacobian of $T$ evaluated at $x$. Let $R_T$ be the reflected operator of $T$, let $\beta \in \RP$, and let $\rho \in [\overline{\lambda}_{\max} \big( \sympart{\Jac_{R_T}}(x) \big) ,+\infty[$.
    The following properties are equivalent:
\begin{enumerate}
    \item \label{penproof1} 
    $T$ is $\beta$-strongly monotone;
    \item \label{penproof2}
    For every $ x \in \RR^n$,  $\sympart{\Jac_T}(x) \succeq \beta \Id$; 
        \item \label{prop:res-reflected:ii}
    For every $ x \in \defspace$,  $\sympart{J_{R_T}}(x) \succeq (2\beta - 1) \Id$;
            \item \label{cor:mono-sym:iii}
    For every $x \in \RR^n$, 
    \begin{equation} \label{cor:mono-sym:def-lmin}
    \rho - \overline{\lambda}_{\max} \Big( \rho \Id - \sympart{\Jac_{R_T}}(x) \Big) 
        \geq 2\beta-1.
    \end{equation}

\end{enumerate} 
In addition, if $\beta > 0$, then $T$ is invertible.
\end{proposition}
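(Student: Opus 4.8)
The plan is to prove the chain \Equivalent{penproof1}{penproof2}, \Equivalent{penproof2}{prop:res-reflected:ii}, and \Equivalent{prop:res-reflected:ii}{cor:mono-sym:iii}, and then to deduce invertibility under $\beta>0$. The analytic core is the equivalence \Equivalent{penproof1}{penproof2}; the remaining two equivalences are algebraic consequences of the affine relation $\Jac_{R_T}=2\Jac_T-\Id$ and of Lemma~\ref{lem:eigenvalues}, respectively.

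For \Implies{penproof2}{penproof1}, I would fix $x,y\in\defspace$ and introduce $\varphi(t)=\scal{T(y+t(x-y))}{x-y}$ on $[0,1]$. By G\^ateaux differentiability, $\varphi$ is differentiable with $\varphi'(t)=\scal{\Jac_T(y+t(x-y))(x-y)}{x-y}=(x-y)^\top\sympart{\Jac_T}(y+t(x-y))(x-y)\geq\beta\|x-y\|^2$, where I use (ii) together with the fact that a quadratic form only sees the symmetric part of a matrix. The mean value theorem then gives $\scal{T(x)-T(y)}{x-y}=\varphi(1)-\varphi(0)=\varphi'(\xi)\geq\beta\|x-y\|^2$ for some $\xi\in(0,1)$, which avoids any integrability subtlety. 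For the converse \Implies{penproof1}{penproof2}, I would apply strong monotonicity to the pair $(x,x+th)$ with $t>0$ and a fixed direction $h$, divide by $t^2$ to obtain $t^{-1}\scal{T(x+th)-T(x)}{h}\geq\beta\|h\|^2$, and pass to the limit $t\to0^+$; the G\^ateaux limit turns the left-hand side into $\scal{\Jac_T(x)h}{h}=h^\top\sympart{\Jac_T}(x)h$, so $h^\top\sympart{\Jac_T}(x)h\geq\beta\|h\|^2$ for every $h$, i.e. $\sympart{\Jac_T}(x)\succeq\beta\Id$.

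The equivalence \Equivalent{penproof2}{prop:res-reflected:ii} is immediate from $R_T=2T-\Id$, which gives $\sympart{\Jac_{R_T}}=2\sympart{\Jac_T}-\Id$, so that $\sympart{\Jac_T}(x)\succeq\beta\Id$ holds if and only if $\sympart{\Jac_{R_T}}(x)\succeq(2\beta-1)\Id$. For \Equivalent{prop:res-reflected:ii}{cor:mono-sym:iii}, I would use that $M\succeq c\Id$ is equivalent to $\lambda_{\min}(M)\geq c$, and then apply Lemma~\ref{lem:eigenvalues} at each $x$ with $M=\sympart{\Jac_{R_T}}(x)$ and $\rho\geq\overline{\lambda}_{\max}(M)$, which rewrites $\lambda_{\min}(M)=\rho-\overline{\lambda}_{\max}(\rho\Id-M)$; condition \eqref{cor:mono-sym:def-lmin} is then exactly $\lambda_{\min}(\sympart{\Jac_{R_T}}(x))\geq2\beta-1$.

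Finally, for invertibility when $\beta>0$: injectivity is immediate, since $T(x)=T(y)$ would force $0=\scal{T(x)-T(y)}{x-y}\geq\beta\|x-y\|^2$ and hence $x=y$. For surjectivity, I would note that G\^ateaux differentiability makes $T$ continuous along every line, so the everywhere-defined monotone operator $T$ is hemicontinuous and therefore maximally monotone; combined with $\beta$-strong monotonicity (equivalently, $\beta$-cocoercivity of $T^{-1}$), a standard Minty--Browder-type surjectivity result yields $\ran T=\defspace$, so that $T$ is a bijection. I expect this surjectivity step to be the main obstacle, as it is the only point relying on a global rather than pointwise/algebraic argument and on an external theorem; one must also be careful that G\^ateaux differentiability alone, without $C^1$ regularity, still suffices, which is precisely why I would route surjectivity through maximal monotonicity rather than through the inverse function theorem.
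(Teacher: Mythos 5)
Your proposal is correct and follows essentially the same route as the paper: the directional-limit argument for \Implies{penproof1}{penproof2}, a one-dimensional mean-value/monotonicity argument for \Implies{penproof2}{penproof1}, the affine identity $\sympart{\Jac_{R_T}}=2\sympart{\Jac_T}-\Id$ for \Equivalent{penproof2}{prop:res-reflected:ii}, Lemma~\ref{lem:eigenvalues} for \Equivalent{prop:res-reflected:ii}{cor:mono-sym:iii}, and injectivity from strict monotonicity plus surjectivity via maximal monotonicity of a strongly monotone operator. The only cosmetic differences are that the paper establishes \Implies{penproof2}{penproof1} by showing an auxiliary function is nondecreasing rather than invoking the mean value theorem, and obtains maximal monotonicity from continuity where you (slightly more carefully) use hemicontinuity.
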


\begin{proof}
We provide a short proof for completeness.
We first show that \ref{penproof1} $\Rightarrow$ \ref{penproof2}. 
By definition \cite[Definition 22.1]{bauschke_convex_2017} of a $\beta$-strongly monotone operator, we have, for every $(x,h) \in (\RR^n)^2$ and $\alpha \in ]0,+\infty[$,
    \begin{equation*}
        \scal{h}{T(x+h) - T(x)} \geq \beta \|h\|^2   
        \Leftrightarrow \quad
        \scal{h}{T(x+ \alpha h) - T(x)} \geq \beta \alpha \|h\|^2 .
    \end{equation*}
    As $T$ is G\^ateaux differentiable, we deduce that
    \begin{align}
        \lim_{\substack{\alpha \to 0 \\ \alpha > 0}}  
        \scal{h}{\frac{T(x+\alpha h) - T(x)}{\alpha}} \geq \beta \|h\|^2  
        &   \quad \Leftrightarrow \quad
        \scal{h}{\Jac_T(x)h} \geq \beta \|h\|^2 \label{prop:pr:ineqJ} \\ 
        &   \quad \Leftrightarrow \quad
        \scal{h}{\sympart{\Jac_T}(x)h} \geq \beta \|h\|^2. \nonumber
    \end{align}
Hence, for every $x \in \RR^n$, $\sympart{\Jac_T}(x) \succeq \beta \Id$.

\smallskip

We now show that \ref{penproof2} $\Rightarrow$ \ref{penproof1}. 
Let $(x, h)\in (\RR^n)^2$, and let $\phi\colon [0,+\infty[ \to \RR  $ be defined as 
    \begin{equation*}
        (\forall \alpha \in [0, +\infty[)\quad
        \phi(\alpha) = \scal{h}{T(x+\alpha h) - T(x)} - \beta \alpha \|h\|^2.
    \end{equation*}
    We notice that $\phi$ is differentiable on $[0,+\infty[$ and its derivative $\phi'$ is such that $\phi(0) = 0$ and that, for every $\alpha \in [0, +\infty[$, $\phi'(\alpha) = \scal{h}{\Jac_T(x) h} - \beta \|h\|^2$.
    According to \ref{penproof2} and \eqref{prop:pr:ineqJ}, we have, for every $ \alpha \in [0, +\infty[$, $\phi'(\alpha) \geq 0$. 
    Thus $\phi$ is an increasing function and 
    \begin{equation*}
        (\forall \alpha \in [0, +\infty[) \quad 
         \scal{h}{T(x+\alpha h) - T(x)} - \beta \alpha  \|h\|^2 = \phi(\alpha) \geq \phi(0) = 0.
    \end{equation*}
    Hence $T$ is $\beta$-strongly monotone. 

By using the definition of positive (semi) definiteness,
\ref{penproof2} is equivalent to \linebreak$\lambda_{\min}\big( \sympart{\Jac_T}(x) \big) \ge \beta$, for every $x \in \RR^n$.

The equivalence with \ref{cor:mono-sym:iii} is a direct consequence of Lemma~\ref{lem:eigenvalues}.

In addition, we have
    \begin{align}
    \text{\ref{penproof2}}
        & \quad \Leftrightarrow \quad
        (\forall u \in \defspace) \quad  
        u^\top \sympart{J_T}(x) u \geq \beta \norm{u}^2 \nonumber\\
        &\quad \Leftrightarrow \quad
        (\forall u \in \defspace) \quad  
          u^\top (2\sympart{J_T}(x) - \Id) u \geq (2 \beta - 1) \norm{u}^2 \nonumber 
    \end{align}    
    Since $2\sympart{J_T}(x) - \Id = \sympart{J_{R_T}}$, we deduce that \ref{penproof2} is equivalent to \ref{prop:res-reflected:ii}.

    By using the definition of positive (semi) definiteness,
\ref{prop:res-reflected:ii} is equivalent to
\begin{equation*}
(\forall x \in \RR^n)\quad 
\lambda_{\min}\big( \sympart{\Jac_{R_T}}(x) \big) \ge 2\beta-1.
\end{equation*}
The equivalence with \ref{cor:mono-sym:iii} is a direct consequence of Lemma~\ref{lem:eigenvalues}.

        The last statement straightforwarly follows from \cite[Corollary~20.28]{bauschke_convex_2017} and \cite[Proposition~22.11]{bauschke_convex_2017} (see also \cite{ahn_invertible_2022}). Since $T$ is strongly monotone, it is strictly monotone, hence injective. Moreover, $T$ is single valued, monotone and continuous and is thus maximally monotone. Lastly, a maximally strongly monotone operator is surjective.
\end{proof}

\begin{remark}\  \label{re:monot}
\begin{enumerate}
    \item The previous proposition allows us to build more general operators than gradients of convex functions.
For example,  let $f \colon \HH \to \RR$ be $\beta$-strongly convex, and twice differentiable. 
    Let $S\colon \HH \to \HH$ be Fr\'echet differentiable and anti-symmetric, i.e., for every $x \in \HH$, $S(x) = -S(x)^\top$.
    Then, as a direct consequence of Proposition~\ref{prop:jacobianmonotony}, $T = \nabla f + S$ is $\beta$-strongly monotone.
    \item \label{re:monotii} 
    Note that NNs using differentiable activation functions (e.g., sigmoids or softmax) are Fr\'echet differentiable. Hence the G\^{a}teaux differentiability assumption required in Proposition~\ref{prop:jacobianmonotony} is fulfilled by such NNs. As highlighted in \cite{bolte_conservative_2020}, properties satisfied for such activation functions generalize to standard non differentiable ones (e.g., ReLU).

    \item \label{re:monotiii} Proposition~\ref{prop:jacobianmonotony}\ref{cor:mono-sym:iii} enables to characterize a differentiable (strongly) monotone operator 
    through the maximum magnitude eigenvalue of some matrices, which can be easily computed  by power iteration. Further details will be provided in the next section
    to train a monotone NN. 
    \item The sufficient strong monotonicity condition for the invertibility of $T$ is restrictive.
    In \cite{ahn_invertible_2022}, a sufficient condition for the strong monotonicity based on the Lipschitzianity  of the Cayley operator associated to $T$ is employed.  Nevertheless, these conditions are not necessary to guarantee the convergence of Algorithm~\ref{a:FBF-pnp}.
\end{enumerate}
\end{remark}

\subsection{Proposed regularization approach for training monotone NNs}\label{ssec:proposedapproach:reg}
We will now leverage the results given in the previous sections to design a method for training monotone NNs. 
In a nutshell, we will make the assumption that, for every $\theta\in \Theta$, $\op$ is Fr\'echet differentiable (see Remark~\ref{re:monot}\ref{re:monotii}) and 
resort to the power iteration method to control the eigenvalues
of the NN, in accordance with Remark~\ref{re:monot}\ref{re:monotiii}.

Subsequently, we will propose an optimization strategy to train monotone NNs. The basic principle for training a NN is to learn its parameters on a specific finite dataset consisting of pairs of input/groundtruth data denoted by $\mathbb{D}_{\rm train}\subset \defspace\times  \defspace$.  
Then the objective is to
\begin{equation}    \label{min:loss-simple}
    \minimize{\theta \in \Theta} \Sum_{(x, y) \in \mathbb{D}_{\rm train}} \mathcal{L}(\op(x), y),
\end{equation}
where $\mathcal{L}\colon \defspace\times  \defspace \to \mathbb{R}$ is a loss function. 
Such a problem can then be solved efficiently using 
stochastic first-order methods
such as Stochastic Gradient Descent, Adagrad, or Adam \cite{schmidt_descending_2021}.
In this paper, we are interested in training NNs that are monotone. In this case, \eqref{min:loss-simple} becomes a constrained optimization problem:
\begin{equation}    \label{min:loss-simple-const}
    \minimize{\theta \in \Theta} \Sum_{(x, y) \in \mathbb{D}_{\rm train}} \mathcal{L}(\op(x), y)
    \quad \text{ such that } \op \text{ is monotone}.
\end{equation}

Standard gradient-based optimization algorithms are not inherently tailored for solving problems with constraints. While it is possible to pair some of these algorithms with projection techniques (as in \cite{neacsu_emg_2024, terris_building_2020}), there are cases when identifying an appropriate projection method can be challenging. Moreover, utilizing projection methods may introduce convergence issues, particularly in scenarios involving non-convex constraints or projection operators with no closed form.

In the literature, we can find two deep learning frameworks ensuring constraints to be satisfied by NNs: either using a specific architecture definition \cite{chaudhari_learning_2023, daniels_monotone_2010}, or enforcing the constraint iteratively during the training procedure, possibly by adopting a penalized approach rather than a constrained one \cite{ahn_invertible_2022, pesquet_learning_2021, terris_building_2020}.
The first method consists in modifying the architecture to ensure that the solution always satisfies the desired property (for example the output of a ReLU function is guaranteed to be nonnegative). The second method enables to constrain any NN architecture at the expense of a higher training complexity.
In this work, we will adopt the second approach, and propose a training procedure imposing monotonicity to the NN independently of its architecture by solving a penalized problem.

First, according to Proposition~\ref{prop:jacobianmonotony}\ref{prop:res-reflected:ii}, we notice that \eqref{min:loss-simple-const} can be rewritten as
\begin{equation}    \label{min:loss-const}
    \minimize{\theta \in \Theta} \Sum_{(x, y) \in \mathbb{D}_{\rm train}} \mathcal{L} \big( \op(x) , y \big) 
    \quad \text{ such that } 
    (\forall x \in \defspace)\quad \lambda_{\text{min}} \big( \sympart{\Jac_{R_{\op}}}(x) \big) \geq -1. 
\end{equation}
Equivalently, we could use the constraint $(\forall x \in \defspace)$ $\lambda_{\text{min}} \big( \sympart{\Jac_{\op}}(x) \big)\ge 0$, but we observed that the former formulation leads numerically to a more stable training. 
As suggested earlier, we introduce an external penalization approach for solving \eqref{min:loss-const} \cite[Section 13.1]{luenberger1984linear}. The proposed penalty function $\mathcal{P}$ is then given by
\begin{equation}\label{e:penalizationraw}
    (\forall \theta \in \Theta)(\forall {x} \in \defspace) \quad 
    \mathcal{P}(\theta, {x}) = -\min \big\{ 1+\lambda_{\text{min}}(\sympart{\Jac_{R_{\op}}}({x})), \epsilon \big\}.    
\end{equation}
In the above definition, $\epsilon\in \RPP$ is a \textit{severity} parameter used to control the enforcement of the constraint.
Further, imposing
the constraint for all possible images in $\defspace$ is not tractable in practice. Instead, we impose it on a subset of images $\mathbb{D}_{\rm penal} \subset \defspace$ similar to the type of images on which the network will be applied. Specifically, $\mathbb{D}_{\rm penal}$ is defined as
\begin{equation}
    \mathbb{D}_{\rm penal} 
    = \Menge{\widetilde{x}\in \defspace }{
    (\exists  (x,y) \in \mathbb{D}_{\rm train})(\exists \nu\in [0, 1])\, 
    \widetilde{x} = \nu x + (1 - \nu) y
    }.
\end{equation}
Then the intuition behind definining this set is to use a modified version of $\mathbb D_{\rm train}$.
First, for computation efficiency, we only take a subset of $\mathbb D_{\rm{train}}$. This is motivated by the fact that in practice computing the Jacobian (see \eqref{e:penalizationraw}) on a full batch to select the smallest eigenvalue would be very costly.
Second, for robustness, instead of working on either noiseless (ground truth) or fully noisy images, we build images that are a convex combination of these two images. In practice, we generate points in $\mathbb{D}_{\rm penal}$ by 
choosing $\nu$ as a realization of a random variable with uniform distribution on $[0,1]$.

Hence, the resulting penalized problem we introduce for training monotone NN is given by
\begin{equation}\label{e:min:loss-pen-gen}
        \minimize{\theta \in \Theta} \Sum_{(x, y) \in \mathbb{D}_{\rm train}} \mathcal{L}(\op(x), y) + \xi \sum_{\widetilde{x} \in \mathbb{D}_{\rm penal}} \mathcal{P}(\theta, \widetilde{x}),
\end{equation}
where $\xi \in \RPP$ is the penalization factor.

\subsection{Penalized training implementation}
\label{Ssec:proposedapproach:training}

This section aims to present the practical procedure we employ for solving \eqref{e:min:loss-pen-gen}, so as to train a monotone neural network.
We will thus provide pseudo-codes usable with deep learning frameworks such as Pytorch. In the following, we first describe how we handle the penalization function $\mathcal{P}$. We then explain how to leverage this to solve our constrained optimization problem.

\smallskip

\begin{algorithm}[b!]
\caption{Computation of $\lambda_{\min} \big( \sympart{\Jac_{R_{\op}}}(\widetilde{x}) \big)$ 
}
\label{a:computingsmallestev}
\begin{algorithmic}
\State \textbf{Input:}
\begin{itemize}
    \item $R_{\op}, \widetilde{x}\in \mathbb{D}_{\rm penal}$ \Comment{\textit{Neural network model and data}}
    \item $N_{\rm iter}$ \Comment{\textit{Parameter}}
\end{itemize} 

\vspace{0.4cm}
\State {Disable auto-differentiation}

\vspace{0.2cm}
\State \label{algo:power:alpha-start} $\quad$
\textit{Computation of $\rho>\overline{\lambda}_{\max} \big( \sympart{\Jac_{R_{\op}}}(\widetilde{x}) \big)$:}
\State $\quad\quad$
$u_0 \leftarrow$ realization of $\Ndist(0,\Id)$
\State $\quad\quad$
    \textbf{for} $k = 1,\ldots, N_{\rm iter}$ \textbf{do}
    \vspace{0.1cm}
    \State $\quad\quad\quad$
    $u_{k+1} = \frac{  \sympart{\Jac_{R_{\op}}}(\widetilde{x}) u_k}{\|u_k\|_2^2}$ 
\State \label{algo:power:alpha-end} $\quad\quad$
\textbf{end for} 
    \State $\quad\quad$ Choose
    $\widehat{\rho} > \frac{u_{k+1}^\top \sympart{\Jac_{R_{\op}}}(\widetilde{x}) u_{k+1}}{\|u_{k+1}\|_2^2}$ \label{algo:power:jacstep:i}
\vspace{0.2cm}
\State \label{algo:power:lambda-start} $\quad$
\textit{Computation of the eigenvector associated with $\chi = \overline{\lambda}_{\max} \big( \rho \Id - \sympart{\Jac_{R_{\op}}}(\widetilde{x}) \big)$:}
\State $\quad\quad$
$v_0 \leftarrow$ realization of $\Ndist(0,\Id)$
\vspace{0.1cm}
\State $\quad\quad$
    \textbf{for} $k = 1,\ldots,N_{\rm iter}$ \textbf{do}
    \vspace{0.1cm}
    \State $\quad\quad\quad$
    $v_{k+1} = \frac{ \big( \widehat{\rho} \Id - \sympart{\Jac_{R_{\op}}}(\widetilde{x}) \big) v_k}{\|v_k\|_2^2}$ \label{algo:power:jacstep:ii}
\State $\quad\quad$ \label{algo:power:no-diff-end}
\textbf{end for} 

\vspace{0.3cm}
\State \label{algo:power:auto-diff-on}
Enable auto-differentiation

\vspace{0.2cm}
\State $\quad$
\textit{Computation of $\chi$:}
\State \label{algo:power:lambda-end} $\quad\quad$
$\widehat{\chi} = \frac{v_{N_{\rm iter}+1}^\top \big( \widehat{\rho} \Id - \sympart{\Jac_{R_{\op}}}(\widetilde{x}) \big) v_{N_{\rm iter}+1}}{\|v_{N_{\rm iter}+1}\|_2^2}$ \label{algo:power:jacstep:iii}

\vspace{0.4cm}
\State \textbf{Output:}
$\widehat{\rho} - \widehat{\chi} \simeq \lambda_{\min}\big( \sympart{\Jac}_{R_{\op}}(\widetilde{x}) \big)$ 
\end{algorithmic}
\end{algorithm}

\paragraph{Penalization computation}
Let $\widetilde{x} \in \mathbb{D}_{\rm penal}$.
We need to evaluate and subdifferentiate $\theta \in \Theta \mapsto \mathcal{P}(\theta, \widetilde{x})$, and in particular $\lambda_{\min}(\sympart{\Jac}_{R_{\op}}(\widetilde{x}))$. 
As mentioned in Remark~\ref{re:monot}\ref{re:monotiii}, \eqref{e:penalizationraw} can be reexpressed 
by using maximum absolute eigenvalues instead of a minimum eigenvalue.
Similarly to \cite{pesquet_learning_2021}, we will use a power iterative method that is designed to provide the largest eigenvalue in absolute value of a given matrix. 
The power iteration makes the computation tractable since only vector products are necessary. In particular, in our simulations, we will use the Jacobian-vector product (JVP) in Pytorch, using automatic differentiation. 
It follows from \eqref{cor:mono-sym:def-lmin} that we actually need to
combine two power iterations. The pseudo-code of the resulting method is given in Algorithm~\ref{a:computingsmallestev}. 
In particular, we first compute $\rho\ge \overline{\lambda}_{\max} \big( \sympart{\Jac_{R_{\op}}}(\widetilde{x}) \big)$ using $N_{\rm iter}$ power iterations in steps~\ref{algo:power:alpha-start}-\ref{algo:power:alpha-end} of Algorithm~\ref{a:computingsmallestev}, followed by a second run of the power iterative method to compute $\chi = \overline{\lambda}_{\max} \big( \rho \Id - \sympart{\Jac_{R_{\op}}}(\widetilde{x}) \big)$ in steps~\ref{algo:power:lambda-start}-\ref{algo:power:lambda-end} of Algorithm~\ref{a:computingsmallestev}. 
Further, in steps~\ref{algo:power:jacstep:i}, \ref{algo:power:jacstep:ii} and~\ref{algo:power:jacstep:iii}, the dot product between the Jacobian and a vector is computed using JVP as mentioned above. JVP is a function that takes three inputs: an operator $T \colon \defspace \to \defspace$ and two vectors $(\widetilde{x}, u) \in (\defspace)^2$. 
It returns $\Jac_{T}(\widetilde{x})u$. Note that in order to compute $\sympart{\Jac}_T(\widetilde{x})u$, we use the 
\href{https://j-towns.github.io/2017/06/12/A-new-trick.html}{double backward trick}\footnote{\href{https://j-towns.github.io/2017/06/12/A-new-trick.html}{https://j-towns.github.io/2017/06/12/A-new-trick.html}} and the transpose of the vector-Jacobian product.

Since all the operations used in Algorithm~\ref{a:computingsmallestev} are differentiable operations, a subgradient of $\theta \in \Theta \mapsto \mathcal{P}(\theta, \widetilde{x})$ can be estimated through auto-differentiation to enable its use in standard optimizers (see next paragraph). 
However, according to \cite{wang_backpropagation-friendly_2019}, the resulting gradient may be noisy and computationally heavy. 
To overcome these issues, we disable the track of the auto-differentiation of the forward operators (using \textit{torch.no\_grad()} function in Pytorch) to compute the needed eigenvalue and its corresponding eigenvector (steps~\ref{algo:power:alpha-start}-\ref{algo:power:no-diff-end}). Then, we activate the auto-differentiation tracking to compute the Rayleigh quotient (step~\ref{algo:power:auto-diff-on}). 
Note that this approximation is equivalent to initializing the power method with the correct eigenvector and then perform one step of the algorithm. 
In Section~\ref{Sec:inverse-train-results}, we will show that the current approximation is accurate enough to allow us to train networks such that the constraint $\lambda_{\min}(\sympart{\Jac}_{R_{\op}}(\widetilde{x}))\ge -1$ is properly satisfied.

\begin{remark}\
\begin{enumerate}
    \item 
    The use of the power iteration has the advantage of bypassing the computation of the Jacobian at a given point, which can be huge depending on the input/output dimensions. 
    \item 
    As a reminder, we cannot decompose our operator into multiple monotone operators as in \cite{runje_constrained_2023}, as the composition of two monotone operators is not necessarily monotone. Thus, being able to enforce monotonicity on a large model in an end to end fashion is required. 
    \item In order to accelerate the process, we only compute the monotonicity penalization on one random element of each batch, which experimentally yields similar results to using a batch of data, with the benefit of speeding up the training.
\end{enumerate}
\end{remark}

\smallskip

\paragraph{Generic training framework}
We present the pseudo-code summarizing the overall learning procedure in Algorithm~\ref{a:learningpseudocode}. At each epoch $j\in \{1, \ldots, N_{\rm epochs}\}$, we iteratively select $\mathbb{B}$ from $\mathbb{D}_{\rm train}$ of size $B$ until all the data has been seen. 
The associated batch loss $\ell_\mathbb{B}$ (see step \ref{a:learningpseudocode:loss} in Algorithm~\ref{a:learningpseudocode}) is associated with a Pytorch object containing all the history of the operations (called \href{https://pytorch.org/blog/computational-graphs-constructed-in-pytorch/}{\textit{computational graph}})\footnote{\href{https://pytorch.org/blog/computational-graphs-constructed-in-pytorch/}{https://pytorch.org/blog/computational-graphs-constructed-in-pytorch/}} used to compute the loss value. It is obtained by simply running a forward step with the neural network $\op$, and the associated loss and penalization values. The computational graph is then used to compute  the subgradient $g_\mathbb{B}$ through auto-differentiation (step~\eqref{a:learningpseudocode:grad} in Algorithm~\ref{a:learningpseudocode}). Finally, the optimizer step (see step~\ref{a:learningpseudocode:optim} in Algorithm~\ref{a:learningpseudocode}) updates the parameters $\theta$ of $\op$ with the selected scheme (e.g., gradient descent
with momentum, etc) \footnote{For example, the Pytorch implementation of Adam  can be found in \linebreak\href{https://pytorch.org/docs/stable/generated/torch.optim.Adam.html}{https://pytorch.org/docs/stable/generated/torch.optim.Adam.html}.}.

\begin{algorithm}
\caption{Training a monotone network $\op$}
\label{a:learningpseudocode}
\begin{algorithmic}
\State \textbf{Input:}
\begin{itemize}
    \item $\op$, $N_{\rm epochs}$, $B$, $\Delta \xi>0$  \Comment{\textit{Training parameters}}
    \item $\mathcal{L}$, $\mathcal{P}$, $\mathbb{D}_{\rm train}$ \Comment{\textit{Loss, penalization and training set}}
    \item Optimizer step: 
    $\mathcal{O} \colon (\theta,g) \mapsto \theta^+$  
    \Comment{\textit{e.g., Adam, SGD, etc.}}
\end{itemize}

\vspace{0.3cm}
\State $\xi \leftarrow 0$

\vspace{0.2cm}
\For {$j = 1,\ldots,N_{\rm epochs}$}
\vspace{0.1cm}
    \For {each batch $\mathbb{B} = \{ (x_b, y_b)\}_{1\le b \le B} \subset \mathbb{D}_{\rm train} $ of size $B$}
        
        \vspace{0.2cm}
        \State
        \textit{Computational graph related to the loss and the penalization:}
        \vspace{0.1cm}
        \State $\quad$
        $b_0 \leftarrow$ realization of discrete random uniform variable in $ \{ 1,\ldots,B \}$
        \State $\quad$
        $\nu \leftarrow$ realization of random uniform variable in $[0, 1]$
        \vspace{0.1cm}
        \State $\quad$
        $\widetilde{x}_{b_0} \leftarrow \nu x_{b_0} + (1 - \nu) y_{b_0}$
        \State $\quad$
        $\ell_{\mathbb{B}}\colon \vartheta \mapsto \frac1B \sum_{b=1}^{B}\mathcal{L}(\opsymbol_\vartheta(x_b), y_b) + \xi \, \mathcal{P}(\vartheta, \widetilde{x}_{b_0})$ 
        \Comment{\textit{Use Algorithm~\ref{a:computingsmallestev}}}
        \label{a:learningpseudocode:loss}
        
        \vspace{0.2cm}
        \State
        \textit{Gradient computation and optimizer step:}
        \vspace{0.1cm}
        \State $\quad$ \label{a:learningpseudocode:grad}
        $g_\mathbb{B}(\theta) \in  \partial \ell_\mathbb{B} (\theta)$
        \State $\quad$ \label{a:learningpseudocode:optim}
        $\theta \leftarrow \mathcal{O} (\theta, g_\mathbb{B}(\theta))$
    \EndFor
    \vspace{0.2cm}
    \State $\xi \leftarrow \xi + \Delta\xi$ \Comment{\textit{Increase penalization parameter}}
\EndFor

\vspace{0.3cm}
\State \textbf{Output:} $\op$
\end{algorithmic}
\end{algorithm}


\section{Learning non-linear model approximations}
\label{Sec:simulations}

In this section, we will study the challenging problem of learning the unknown forward model in a non-linear inverse imaging problem
and propose a method to solve this problem. 
The code associated with our experiments, including the training of monotone NNs is available on \href{https://github.com/Youyoun/truly_monotone_operator}{GitHub}.

We consider a non-linear inverse problem, where the objective is to find an estimate $\widehat{x} \in C$, with $C \subset \defspace$, of an original unknown image $\overline{x} \in C$ from degraded measurements $y\in \mathbb{R}^m$ obtained as
\begin{equation}\label{e:generalinverseproblem}
    y = \F(\overline{x}) + w,
\end{equation}
where $\F \colon \RR^n \to \RR^n$ models 
a measurement operator (possibly unknown)
, and $w \in \RR^n$ is a realization of an additive i.i.d. white Gaussian random variable with zero-mean and standard deviation $\sigma \ge 0$.

In this section, we assume that the measurement operator $\F$ is unknown. In turn, we have access to a collection of pairs of true images and associated measurements $(\overline{x},y) \in \mathbb{D}_{\rm train}$, that we will use to learn $\F$. Further, since the objective is 
to estimate $\widehat{x}$
, we will see next that it is judicious
to learn a monotone approximation of $\F$.
In the following, we will assume that $C$ is closed convex with a nonempty interior.

\subsection{Measurement operator setting}
\label{Ssec:models-F}

In the remainder, we assume that the measurement operator $\F$ is a sum of $K \in \NN$ simple non-linear composite functions, where the inner terms correspond to linear convolution filters. Precisely, we consider
\begin{equation}\label{e:generalF}
    (\forall x \in \RR^n) \quad 
    \F(x) = \frac{1}{K} \sum_{k=1}^{K}
    S_\delta(L_kx), 
\end{equation}
where $S_\delta$ is a saturation function (e.g. Hyperbolic tangent function) with parameter $\delta>0$ and, for every $k \in \{1, \ldots, K\}$, $L_k \in \RR^{n\times n}$ (e.g., $L_k$ may correspond to a convolution operator). 
Such a model has been considered for instance in~\cite{combettes2022variational}, in the particular case when $K=1$, where $L_1$ is known, and $S_\delta$ is assumed to be firmly nonexpansive.

In our experiments, we consider model~\eqref{e:generalinverseproblem}-\eqref{e:generalF} with either $K=1$ or $K=5$ motion blur kernels, of size $9 \times 9$, generated randomly using the \href{https://github.com/LeviBorodenko/motionblur}{motionblur toolbox}\footnote{\href{https://github.com/LeviBorodenko/motionblur}{https://github.com/LeviBorodenko/motionblur}}. The $5$ generated kernels are displayed in Figure~\ref{figure:Kernels}. These kernels are all positive and are normalized so that the sum of the components is equal to $1$.
In addition, we choose $S_\delta$ to be a modified $\tanh$ function 
(more details are given in Appendix~\ref{Sec:SaturationFunction}), with $\delta \in \{1, 0.6\}$. 
Finally, we will consider two noise levels with standard deviation $\sigma \in \{0, 10^{-2}\}$.

Note that there is no guarantee that model~\eqref{e:generalF} is monotone. 
Solving inverse problems with such an operator can appear to be challenging, especially due to the non-linearity $S$. 
A standard practice in such an inverse problem context consists in replacing $F$ by its first-order approximation expressed as 
\begin{equation}\label{e:generalFaff}
    (\forall x \in \RR^n) \quad \Faff(x) = \frac{\delta}{K} \sum_{k=1}^{K}L_kx + \dfrac{1-\delta}{2}. 
\end{equation}
For simplicity, 
when the parameter $\delta$ is assumed to be unknown, we can use instead the following linear approximation:
\begin{equation}\label{e:generalFlinear}
    (\forall x \in \RR^n) \quad \Flin(x) = \frac{\delta}{K} \sum_{k=1}^{K}L_kx. 
\end{equation}
Such a linearization of the model is standard in inverse problems. Note that it does not introduce any bias when $\delta = 1$.

Since the convolution filters are in practice unknown, $\Flin$ represents the linear oracle. For this purpose, we will be approximating $F$ using a single linear operator $\linop$, in order to compare our method to a more realistic result. More details about the results of this approximation are provided in Appendix~\ref{sec:linear-approx}.

An example is provided in Figure~\ref{fig:degrad_fn_example}, for the case when $K=5$, $\sigma=0$, and $S_\delta$ is chosen as in Appendix~\ref{Sec:SaturationFunction}. From left to right, the figure shows the true image $\overline{x}$, the observed image $y = F(\overline{x})$, and the approximated linear degradation obtained as $\widetilde{y} = \Flin(\overline{x})$. It can be observed that the light background behind the penguin is darker when using the true acquisition model $F$ than with the linear version $\Flin$, due to the saturation function $S_\delta$.

\begin{figure}[!h]
    \centering
    \setlength{\tabcolsep}{0.1cm}
    \footnotesize
    \begin{tabular}{ccccc}
    \includegraphics[width=0.15\linewidth]{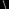}
    &   \includegraphics[width=0.15\linewidth]{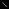}
    &   \includegraphics[width=0.15\linewidth]{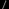}
    &   \includegraphics[width=0.15\linewidth]{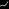}
    &   \includegraphics[width=0.15\linewidth]{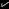} \\
    $L_1$ & $L_2$ & $L_3$ & $L_4$ & $L_5$ 
    \end{tabular}

    \vspace*{-0.2cm}
    \caption{\small Blurring kernels used to model linear operators $(L_k)_{1\le k \le 5}$, used in model~\eqref{e:generalF}.}%
    \label{figure:Kernels}
\end{figure}
\begin{figure}[!h]
    \centering
    \setlength{\tabcolsep}{0cm}
    \footnotesize
    \begin{tabular}{cccc}
        \includegraphics[height=2.1cm]{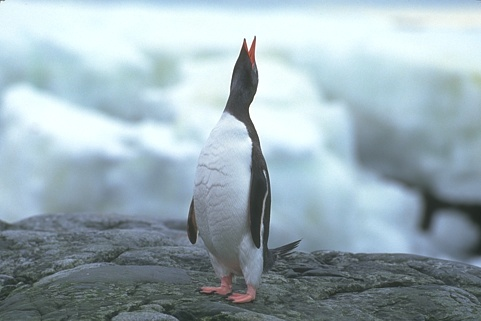}
        &   
        \includegraphics[height=2.1cm]{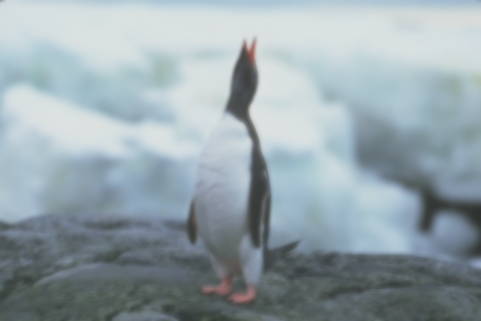}
        &   
        \includegraphics[height=2.1cm]{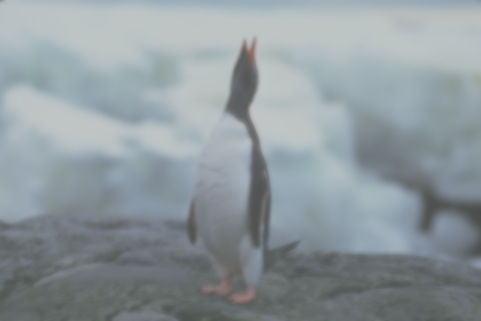} 
        &
        \includegraphics[height=2.1cm]{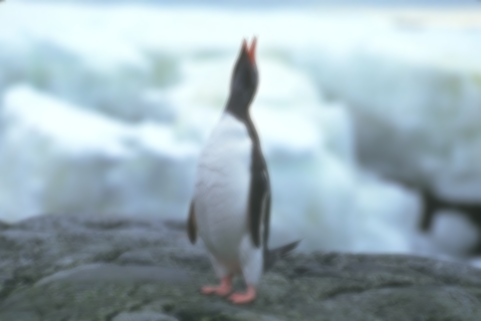} \\
        $\overline{x}$ & $y=F(\overline{x})$ & $y=F(\overline{x})$ & $\widetilde{y} = \Flin(\overline{x})$  \\ 
         & $\delta=1$ & $\delta=0.6$ & 
    \end{tabular}

    \vspace*{-0.2cm}
    \caption{\small Example of an original image $\overline{x}$, the observation of this image through \eqref{e:generalF} with $K=5$ and $\sigma=0$, and the linearized observation of $\overline{x}$ through~\eqref{e:generalFlinear}. }
    \label{fig:degrad_fn_example}
\end{figure}

\subsection{Monotone inclusion formulation to solve~\eqref{e:generalinverseproblem}}
\label{sssec:sim:image:mono}

The objective of this section is to find an estimate $\widehat{x}$ of $\overline{x}$ from $y$, solving the inverse problem~\eqref{e:generalinverseproblem}, i.e., inverting $\F$. Note that the approach described below is not restricted to the particular choice of operator $F$ described in Section~\ref{Ssec:models-F}.

The problem of interest is equivalent to
\begin{equation}
\mbox{find}\; \widehat{x} \in C 
\text{ such that } \F(\widehat{x}) \approx y.
\end{equation}
We propose two approaches to solve this problem.

\smallskip
\paragraph{\textbf{Direct regularized approach}}
We propose to define $\widehat{x}$ as a solution to a regularized monotone inclusion problem of the form:
\begin{equation}\label{e:maininverse}
    \text{find } \widehat{x} \in \defspace \text{ such that } 0 \in \op(\widehat{x}) - y + \varrho \nabla r(\widehat{x}) + N_C(\widehat{x}),
\end{equation}
where $\op \colon \defspace \to \defspace$ is a continuous monotone approximation to $\F$ on $C$, $\varrho\in \RP$, and 
$r \colon \RR^n \to \RR$ is a differentiable and convex regularization function. 

Interestingly, \eqref{e:maininverse} is an instance of Problem~\ref{prob:main}, where
\begin{equation}\label{e:problem_functions}
    (\forall x \in \RR^n)\quad
    \begin{cases}
        h(x) = -\scal{y}{x}\\ 
        A(x) = \op(x) + \varrho \nabla r(x).
    \end{cases}
\end{equation}
We have the following result concerning the existence of
a solution to \eqref{e:maininverse}.
\begin{proposition}\label{prop:existsolinvprob}
Assume that one of the following statements holds:
\begin{enumerate}
    \item $\rho > 0$ and $r$ is strongly convex,
    \item $C$ is bounded.
\end{enumerate}
Then there exists a solution $\widehat{x}$ to \eqref{e:maininverse}.
\end{proposition}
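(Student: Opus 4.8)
The plan is to recast \eqref{e:maininverse} as the monotone inclusion $0 \in (B + N_C)(\widehat{x})$, where $B = \op + \varrho\nabla r - y$ is precisely the operator $A + \nabla h$ attached, through \eqref{e:problem_functions}, to this instance of Problem~\ref{prob:main} (recall $\nabla h \equiv -y$). Since $\op$ is monotone and continuous on $\defspace$ and $\nabla r$ is monotone and continuous (as $r$ is convex and differentiable), $B$ is a monotone continuous operator with full domain $\dom B = \defspace$, hence maximally monotone by \cite[Corollary~20.28]{bauschke_convex_2017}. Because $\dom B = \defspace$, no extra qualification is needed and $B + N_C$ is maximally monotone by \cite[Corollary~25.5]{bauschke_convex_2017}. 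Existence of a solution is then equivalent to $0 \in \ran(B + N_C)$, so in each case it suffices to show that $B + N_C$ is surjective (or at least that $0$ lies in its range).

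In Case (i), I would exploit strong monotonicity. If $r$ is $\mu$-strongly convex, then $\nabla r$ is $\mu$-strongly monotone, so $\varrho\nabla r$ is $\varrho\mu$-strongly monotone with $\varrho\mu > 0$; adding the monotone operator $\op$ and the constant $-y$ leaves the strong monotonicity intact, whence $B$, and a fortiori $B + N_C$, is $\varrho\mu$-strongly monotone. A strongly monotone maximally monotone operator is surjective — this is exactly the ingredient already used at the end of the proof of Proposition~\ref{prop:jacobianmonotony} via \cite[Proposition~22.11]{bauschke_convex_2017} — which yields $0 \in \ran(B + N_C)$, and in fact a unique solution.

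In Case (ii), strong monotonicity is unavailable, so I would instead rely on the boundedness of $C$. Here $\dom(B + N_C) = \dom B \cap \dom N_C = \defspace \cap C = C$ is bounded, and a maximally monotone operator with bounded domain is surjective: its inverse takes values in the bounded set $C$ and is therefore bounded, so surjectivity follows from the standard coercivity/Browder criterion for zeros of maximally monotone operators, giving again $0 \in \ran(B + N_C)$. An equally valid, self-contained route is a Tikhonov-regularization argument: for each $\epsilon > 0$ the operator $B + \epsilon\Id$ is $\epsilon$-strongly monotone, so Case (i) produces a unique $\widehat{x}_\epsilon \in C$ with $-\epsilon\widehat{x}_\epsilon \in (B+N_C)(\widehat{x}_\epsilon)$; compactness of $C$ then furnishes a cluster point $\widehat{x} \in C$ of $(\widehat{x}_\epsilon)$ as $\epsilon \downarrow 0$, and since $-\epsilon\widehat{x}_\epsilon \to 0$ and $B + N_C$ has closed graph, one concludes $0 \in (B+N_C)(\widehat{x})$.

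The main obstacle I anticipate is Case (ii): the clean implication ``bounded domain $\Rightarrow$ surjective'' is less directly packaged than the strongly monotone case, so one must either locate the precise surjectivity theorem or fall back on the Tikhonov/compactness argument, taking care that the limiting step legitimately invokes the closedness of the graph of the maximally monotone operator $B + N_C$ rather than any pointwise continuity of the set-valued part $N_C$.
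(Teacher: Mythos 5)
Your proposal is correct and follows essentially the same route as the paper: recast \eqref{e:maininverse} as finding a zero of a maximally monotone sum, then invoke strong monotonicity in case (i) and boundedness of the domain ($=C$) in case (ii) to get surjectivity/existence of a zero, exactly the ingredients the paper draws from \cite[Corollary~23.37(ii)]{bauschke_convex_2017} and \cite[Proposition~23.36(ii)]{bauschke_convex_2017}. The only cosmetic difference is that the paper assumes $A$ monotone only on $C$ and therefore passes through a maximally monotone extension $\widetilde{A}$ (as in Proposition~\ref{p:conv}) rather than asserting full-domain monotonicity of $\op+\varrho\nabla r$, but this does not change the argument.
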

\begin{proof}
Since $A$ is monotone on $C$, it admits a maximally monotone extension $\widetilde{A}$ on $\RR^n$. Problem
\eqref{e:maininverse} is thus equivalent to find
a zero of $\widetilde{A}+\nabla h+N_C$. 
By proceeding similarly to the proof of Proposition~\ref{p:conv} 
we can show that this operator is maximally monotone.
In addition, if $\rho> 0$ and $r$ is strongly convex, it is strongly monotone. The existence of a solution
to \eqref{e:maininverse} follows then from
\cite[Proposition~23.36(ii)]{bauschke_convex_2017} 
and \cite[Corollary~23.37(ii)]{bauschke_convex_2017}
\end{proof}

To solve \eqref{e:maininverse} numerically, we can make use of
Algorithm~\ref{a:FBF-pnp}.
The convergence of this algorithm is guaranteed by Proposition~\ref{p:conv}, as soon as condition \ref{p:convii} on the choice of the step-size is satisfied.

\begin{remark}\label{rmk:invert_nn}
In the particular case when $y$ is replaced by $\op(\overline{x})$ in~\eqref{e:maininverse}, Algorithm~\eqref{a:FBF} enables computing  a regularized inverse of $\op$ delivering an estimate
$x^*$ to $\overline{x}$.
In particular, if $\rho=0$ and $x^*\in \inte C$, $x^* = \op^{-1}(\op(\overline{x}))$.

\end{remark}

\smallskip
\paragraph{\textbf{Least-squares regularized approach}}
In addition to the direct approach described above, we will investigate an original reformulation of problem~\eqref{e:maininverse} as
\begin{equation}
    \text{Find } \widetilde{x} \in \defspace 
    \text{ such that } 
    0 \in {\Flin}^{\top} \op(\widetilde{x}) - \widetilde{y} + \varrho \nabla r(\widetilde{x}) + N_C(\widetilde{x}) \label{e:maininverse-leastsquares} 
\end{equation}
where $\widetilde{y} = {\Flin}^{\top} y$ and $\op \colon \defspace \to \defspace$ is a continuous approximation to $\F$ on $C$. 
This approach is potentially less restrictive than \eqref{e:maininverse} as it does not impose the approximation to the linear model to be monotone, but
requires 
only ${\Flin}^{\top} \op$ to be monotone.
Interestingly, when $\op = \Flin$,
\eqref{e:maininverse-leastsquares} corresponds to the standard least-squares formulation. 
Then the existence of a solution to \eqref{e:maininverse-leastsquares} is guaranteed similarly to Proposition~\ref{prop:existsolinvprob}.
Algorithm~\ref{a:FBF} can be rewritten for solving \eqref{e:maininverse-leastsquares}, similarly to \eqref{e:TsengalgoGeneral}.

\begin{remark}\label{rmk:leastsquares}\
\begin{enumerate}
\item 
Similarly to Remark~\ref{rmk:invert_nn}, operator $A$ can be inverted using~\eqref{e:maininverse-leastsquares} by setting $\widetilde{y} = {\Flin}^{\top} \overline{x}$ and $\varrho = 0$. 
\item\label{rmk:leastsquares:ii}
According to \cite[Prop.~20.10]{bauschke_convex_2017},
when $K=1$, ${\Flin}^{\top} F=L_1^\top\circ S_\delta \circ L_1$ is monotone.

\end{enumerate}
\end{remark}

\smallskip
\paragraph{\textbf{Learned approximations to $F$}}
In this work, we assume that neither the kernels $(L_k)_{1 \le k \le K}$ nor the parameter $\delta$ are known in~\eqref{e:generalF}.
Then, we leverage both the direct and the least-squares formulations described above to derive our learned approximation strategies for $F$. The different considered approaches are described below:
\begin{enumerate}
\item 
The first classical strategy consists in learning a linear approximation $\linop$ of $F$. 
\item 
We leverage the training approach developed in Section~\ref{ssec:proposedapproach:reg} to learn a monotone approximation $\monop$ of $F$.
\item 
To show the necessity of the proposed constrained training approach to obtain a monotone operator, we also consider an approximation $\nmonop$ of $F$, that is learned without the proposed regularization (i.e., $\xi=0$ in \eqref{e:min:loss-pen-gen}).
\item 
We finally leverage the least-squares formulation~\eqref{e:maininverse-leastsquares} combined with the training approach developed in Section~\ref{ssec:proposedapproach:reg} to learn a monotone operator ${\linop}^{\top} \op$. Since $\Flin$ is unknown, we use the learned linear operator $\linop$, and learn $\op$ under the constraint that ${\linop}^{\top} \op$ is monotone. In the remainder, we will use the notation $\monopsym = {\linop}^{\top}\op$ when referring to this approximation strategy.
\item Similarly, the least squares approach can be used with $\linop$, defining $\linopsym = {\linop}^{\top} \linop$ which is monotone.
\end{enumerate}

\begin{remark}\
\begin{enumerate}
\item In the last restoration approach based on 
the least-squares strategy \eqref{e:maininverse-leastsquares},
$\Flin$ is replaced by $\linop$ and $\widetilde{y} = {\linop}^{\top} y$.
\item 
We emphasize that none of the proposed restoration procedures can be used with $\nmonop$ without losing theoretical guarantees, since neither $\nmonop$ nor $({\linop}^\top \nmonop)$ are monotone. 
\end{enumerate}
\end{remark}

We summarize all the different forward models and notation used in this section in Table~\ref{tab:model-summary}.

\begin{table}[h!]
    \centering\small
    \setlength\tabcolsep{0.1cm}
    \setlength\extrarowheight{0.1cm}
    \begin{tabular}{lp{6.6cm}|cc}
        \hline
        Model 
        & Description & Monotone & Learned \\
        \hline
        $\F$ 
        & True model (see \eqref{e:generalF}) & No & No \\
        $\Faff$ 
        & Affine approximation (see \eqref{e:generalFaff}) & No$^\ast$ & No \\
        $\Flin$ 
        & Linear approximation (see \eqref{e:generalFlinear}) & No$^\ast$ & No \\
        $\linop$
        & Learned linear approximation (see Section~\ref{sssec:sim:image:mono}, \textit{Learned approximations to $F$ (i)}) & No* & Yes \\
        $\linopsym = {\linop}^{\top} \linop$
        & Learned linear approximation using least-squares formulation~\eqref{e:maininverse-leastsquares} (see Section~\ref{sssec:sim:image:mono}, \textit{Learned approximations to $F$ (v)})
        & Yes & Yes \\ 
        $\monop$
        & Learned monotone approximation (see Section~\ref{sssec:sim:image:mono}, \textit{Learned approximations to $F$ (ii)}) & Yes & Yes \\
        $\nmonop$
        & Learned approximation without monotone constraint (see Section~\ref{sssec:sim:image:mono}, \textit{Learned approximations to $F$ (iii)}) & No & Yes \\
        $\monopsym = {\linop}^{\top}\op$
        & Learned approximation using least-squares formulation~\eqref{e:maininverse-leastsquares} (see Section~\ref{sssec:sim:image:mono}, \textit{Learned approximations to $F$ (iv)}) & Yes & Yes \\
        \hline
    \end{tabular}
    \caption{Summary of the different models and notation used for our simulations. $^\ast$Note that approximations $\Faff$ and $\Flin$ are generally not monotone.
   Motonicity is garanteed if $(L_k)_{1 \le k \le K}$ are positive semi-definite operators. Similarly $\linop$ is not necessarily monotone if no regularization is imposed during training.}
    \label{tab:model-summary}
\end{table}

\subsection{Model and training procedure}

In this section we describe the learning procedures adopted for the considered approximations to $F$ described in the previous section, focusing on non-linear approximations. %
The learning procedure of the linear operator $\linop$ is given in Section~\ref{sec:linear-approx}.

\smallskip

\paragraph{NN architecture} 
We use a Residual Unet \cite{zhang_road_2018} consisting of $5$ blocks where the output of each down convolution have $32$, $64$, $128$, $256$, and $512$ feature maps, respectively. LeakyReLU activations were used for the intermediate layers. No activation was used in the last layer, so the learned NN must model the saturation using its weights and biases.

\smallskip

\paragraph{Training dataset}
For $\mathbb{D}_{\rm train}$, we use patches of size $256 \times 256$, randomly extracted from the BSD500 dataset, with pixel-values scaled to the range $[0, 1]$. 
The couples $(\overline{x},y)$ in $\mathbb{D}_{\rm train}$ are linked through model~\eqref{e:generalinverseproblem}-\eqref{e:generalF}. To investigate the stability against noise, we consider two cases for the training set: (i) training without noise (i.e., $\sigma_{\rm train} = 0$), and (ii) training with noise level $\sigma_{\rm train} =0.01$. \\
For $\mathbb{D}_{\rm penal}$, we crop the images of $\mathbb{D}_{\rm train}$ to smaller patches of size $64 \times 64$.
We set $68$ images from the dataset as test images (corresponding to BSD68), and use the other images for training.

\smallskip

\paragraph{Training procedure}
The NN is trained using Adam optimizer with a learning rate of $2\times 10^{-4}$ for $200$ epochs to solve \eqref{e:min:loss-pen-gen} with an $\ell_1$ loss function:
\begin{equation}\label{e:l1loss}
   \mathcal{L}(\op(x), y) = \|\op(x) - y\|_1.
\end{equation}
We use a learning rate scheduler that decreases the learning rate by a factor of $0.1$ when the training loss reaches a plateau. Model $\nmonop$ is trained by setting $\xi=0$ in \eqref{e:min:loss-pen-gen} and $\Delta \xi = 0$, and Model $\monop$ is obtained by setting $\xi=0.1$ in \eqref{e:min:loss-pen-gen} at start and $\Delta \xi = 0.1$. Finally, $\epsilon$ is chosen equal to $0.01$ in~\eqref{e:penalizationraw}.

\begin{table}[!h]
    \centering\small
    \begin{tabular}{lllcc}
        \toprule
        Filters   &  Noise & Model &  $\text{MAE}(y, \op(\overline{x}))$ &  $\min \lambda_{\text{min}} \big( \sympart{\Jac_{\op}}(\overline{x}) \big)$ \\
        &&& ($\times 10^{-2}$) & ($\times 10^{-2}$) \\
        \midrule

        \midrule
        \multicolumn{5}{c}{$\delta = 1$ for $S_\delta$ in~\eqref{e:generalF}} \\ 
        \midrule
        \multirow{8}{*}{$K=1$}
        & \multirow{4}{*}{$\sigma_{\rm train}=0$} &  $\monop$ & 1.5658 ($\pm$ 0.58) & 1.67 \\
         & & $\nmonop$  & 0.2932 ($\pm$ 0.11) & -29.99 \\
         & & $\monopsym$  & 0.6822 ($\pm$ 0.25) & $0.69^*$ \\
         & & $\linop$  & 2.1474 ($\pm$ 1.38) & -24.69 \\
         \cline{2-5}
         & \multirow{4}{*}{$\sigma_{\rm train}=0.01$} & $\monop$ & 1.1575 ($\pm$ 0.42) & 1.10   \\
         & & $\nmonop$ & 0.3020 ($\pm$ 0.11) & -27.72 \\
         & & $\monopsym$  & 0.5351 ($\pm$ 0.19) & $1.13^*$ \\
         & & $\linop$  & 2.1607 ($\pm$ 1.37) & -25.87 \\
        \hline
        \multirow{8}{*}{$K=5$}
        &  \multirow{4}{*}{$\sigma_{\rm train}=0$} & $\monop$ & 0.5272 ($\pm$ 0.20) & 1.18 \\
         & & $\nmonop$ & 0.2795 ($\pm$ 0.10) & -22.06 \\
         & & $\monopsym$ & 0.6108 ($\pm$ 0.22) & $1.80^*$ \\
         & & $\linop$  & 2.1473 ($\pm$ 1.38) & -13.86 \\
         \cline{2-5}
         & \multirow{4}{*}{$\sigma_{\rm train}=0.01$} & $\monop$ & 0.9414 ($\pm$ 0.34) & 1.80 \\
         && $\nmonop$ & 0.2714 ($\pm$ 0.09) & -25.48 \\
         & & $\monopsym$  & 0.6591 ($\pm$ 0.25) & $1.64^*$ \\
         & & $\linop$  & 2.1594 ($\pm$ 1.37) & -16.55 \\

        \midrule
        \multicolumn{5}{c}{$\delta = 0.6$ for $S_\delta$ in~\eqref{e:generalF}} \\ 
        \midrule
        \multirow{8}{*}{$K=1$}
        & \multirow{4}{*}{$\sigma_{\rm train}=0$} &  $\monop$ & 1.2816 ($\pm$ 0.53) & 1.91 \\
         & & $\nmonop$  & 0.2376 ($\pm$ 0.09) & -18.73 \\
         & & $\monopsym$  & 0.4311 ($\pm$ 0.14) & $0.37^*$ \\
         & & $\linop$  & 8.2009 ($\pm$ 2.70) & -42.79 \\
         \cline{2-5}
         & \multirow{4}{*}{$\sigma_{\rm train}=0.01$} & $\monop$ & 1.1689 ($\pm$ 0.45) & 1.65 \\
         & & $\nmonop$ & 0.2275 ($\pm$ 0.08) & -23.35 \\
         & & $\monopsym$  & 0.4679 ($\pm$ 0.17) & $0.54^*$ \\
         & & $\linop$  & 8.1993 ($\pm$ 2.69) & -43.18 \\
        \hline
        \multirow{8}{*}{$K=5$}
        &  \multirow{4}{*}{$\sigma_{\rm train}=0$} & $\monop$ & 0.7720 ($\pm$ 0.30) & 1.52 \\
         & & $\nmonop$ & 0.1435 ($\pm$ 0.05) & -17.38 \\
         & & $\monopsym$  & 0.4327 ($\pm$ 0.14) & $0.40^*$ \\
         & & $\linop$  & 8.1920 ($\pm$ 2.70) & -39.61 \\
         \cline{2-5}
         & \multirow{4}{*}{$\sigma_{\rm train}=0.01$} & $\monop$ & 0.6867 ($\pm$ 0.25) & 0.66 \\
         & & $\nmonop$ & 0.1788 ($\pm$ 0.06) & -19.04 \\
         & & $\monopsym$  & 0.4809 ($\pm$ 0.15) & $0.33^*$ \\
         & & $\linop$  & 8.1969 ($\pm$ 2.70) & -35.39 \\
        \bottomrule
    \end{tabular}
    \caption{\small Training results obtained with monotone regularization ($\monop$), and without the regularization ($\nmonop$). Results linked to the least squares operator $\monopsym$ and to the learned linear kernel $\linop$ are also presented. Metrics were computed on cropped images of size $256 \times 256$ from the BSD68 test set. \linebreak ${}^*$For $\monopsym$, the smallest eigenvalue was computed for the whole operator ${\linop}^{\top} \op$.}
    \label{tab:results_trainings}
\end{table}

\begin{figure}[!b]
    \centering
    \scriptsize
    \setlength{\tabcolsep}{0.05cm}
    \begin{tabular}{ccc}
        \includegraphics[height=2.7cm]{Figures/Simulation/Results20231127/images_F/1/F_x.png}
        & \includegraphics[height=2.7cm]{Figures/Simulation/Results20231127/images_F/1/F_lin_x.png}
        & \includegraphics[height=2.7cm]{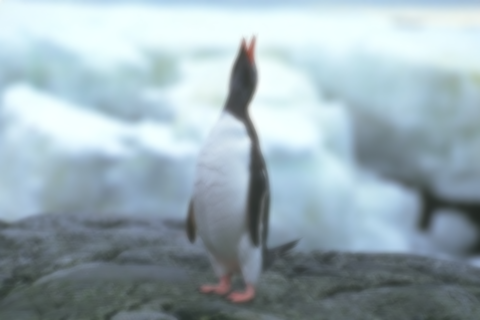} \\
        $y=F(\overline{x})$ & $y_{\Flin} = \Flin(\overline{x})$ & $y_{\linop}=\linop(\overline{x})$ \\
        PSNR $= 24.07$ & MAE($y, y_{\Flin}$) $= 0.035$ & MAE($y, y_{\linop}$) $=0.035$ \\
        $ (\lambda_{\min}, \lambda_{\max}) = (-80.52, 81.37)$ & $ (\lambda_{\min}, \lambda_{\max}) = (-0.09, 1.00)$ & $ (\lambda_{\min}, \lambda_{\max}) = (-0.14, 0.99)$ \\[0.2cm]
        \includegraphics[height=2.7cm]{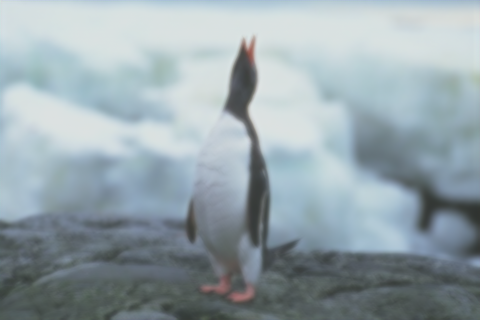} 
        & \includegraphics[height=2.7cm]{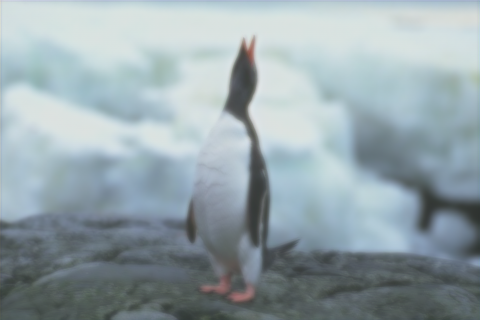}
        & \includegraphics[height=2.7cm]{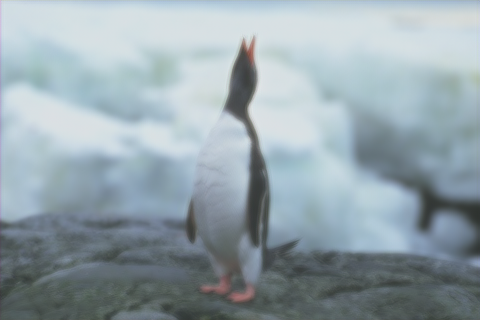} \\
        $y_{\nmonop} = \nmonop(\overline{x})$ & $y_{\monop} = \monop(\overline{x})$ & $y_{\monopsym} = \op(\overline{x})$  \\
        MAE($y, y_{\nmonop}$) $= 0.002$ & MAE($y, y_{\monop}$) $= 0.004$ & MAE($y, y_{\monopsym}$) $=0.004$ \\
        $ (\lambda_{\min}, \lambda_{\max}) = (-0.21, 1.01)$ & $(\lambda_{\min}, \lambda_{\max}) = (0.01, 1.00)$ & $(\lambda_{\min}, \lambda_{\max}) = (0.00, 0.87)$ \\[0.3cm]
        \includegraphics[height=2.7cm]{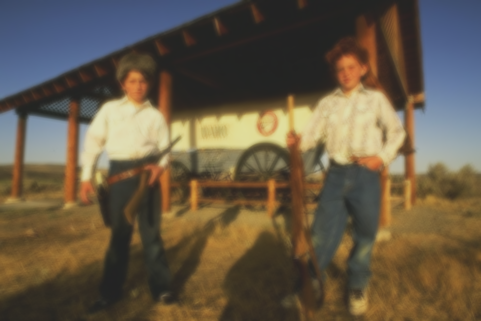}
        & \includegraphics[height=2.7cm]{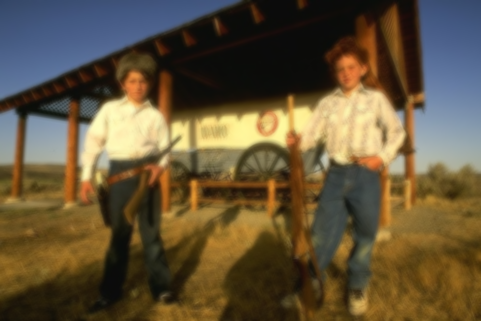}
        & \includegraphics[height=2.7cm]{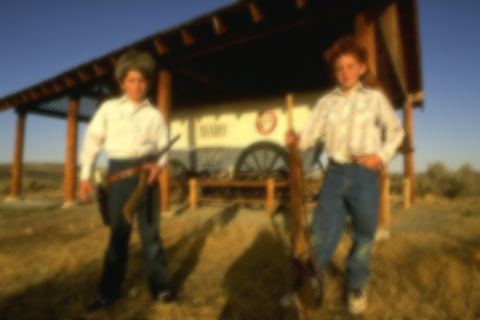} \\
        $y=F(\overline{x})$ & $y_{\Flin} = \Flin(\overline{x})$ & $y_{\linop}=\linop(\overline{x})$ \\
        PSNR $= 21.17$ & MAE($y, y_{\Flin}$) $= 0.037$ & MAE($y, y_{\linop}$) $=0.037$ \\
        $ (\lambda_{\min}, \lambda_{\max}) = (-80.52, 81.37)$ & $ (\lambda_{\min}, \lambda_{\max}) = (-0.09, 1.00)$ & $ (\lambda_{\min}, \lambda_{\max}) = (-0.14, 0.99)$ \\[0.2cm]
        \includegraphics[height=2.7cm]{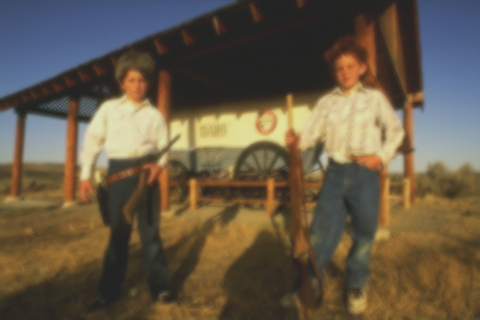} 
        & \includegraphics[height=2.7cm]{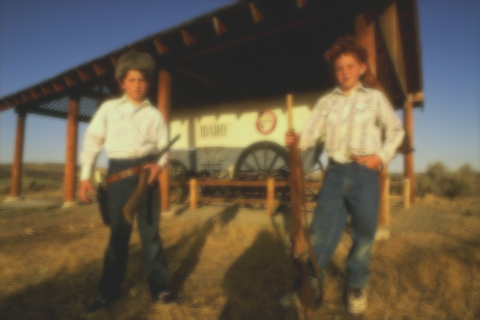}
        & \includegraphics[height=2.7cm]{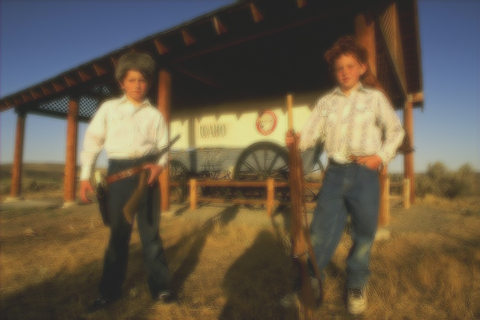} \\
        $y_{\nmonop} = \nmonop(\overline{x})$ & $y_{\monop} = \monop(\overline{x})$ & $y_{\monopsym} = \op(\overline{x})$  \\
        MAE($y, y_{\nmonop}$) $= 0.003$ & MAE($y, y_{\monop}$) $= 0.005$ & MAE($y, y_{\monopsym}$) $=0.006$ \\
        $ (\lambda_{\min}, \lambda_{\max}) = (-0.21, 1.01)$ & $(\lambda_{\min}, \lambda_{\max}) = (0.01, 1.00)$ &  $ (\lambda_{\min}, \lambda_{\max}) = (0.00, 0.92)$
    \end{tabular}
    
    \caption{\small
    Examples of output images obtained with the different versions of the measurement operator, with $\delta = 1$. First and third rows, left to right: true unknown operator $F$, true unknown linear approximation $\Flin$, learned linear approximation $\linop$. Second and fourth rows, left to right: learned non-monotone approximation $\nmonop$, proposed learned monotone approximation  $\monop$, and proposed relaxed monotone approximation $\monopsym$. 
    All results are shown when training models without noise (i.e., $\sigma_{\rm train}=0$).
    }%
    \label{f:operatorexamples-1}
\end{figure}

\begin{figure}[!b]
    \centering
    \scriptsize
    \setlength{\tabcolsep}{0.05cm}
    \begin{tabular}{ccc}
        \includegraphics[height=2.7cm]{Figures/Simulation/Results20231127/images_F/1/F_hard_x.png}
        & \includegraphics[height=2.7cm]{Figures/Simulation/Results20231127/images_F/1/F_lin_x.png}
        & \includegraphics[height=2.7cm]{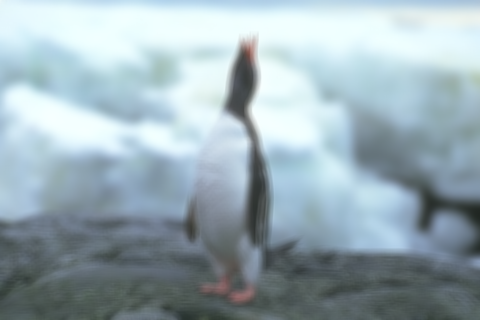} \\ 
        $y=F(\overline{x})$ & $y_{\Flin} = \Flin(\overline{x})$ & $y_{\linop}=\linop(\overline{x})$ \\
        PSNR $= 17.33$ & MAE($y, y_{\Flin}$) $= 0.110$ & MAE($y, y_{\linop}$) $=0.108$ \\
        $ (\lambda_{\min}, \lambda_{\max}) = (-156.86, 157.59)$ & $ (\lambda_{\min}, \lambda_{\max}) = (-0.09, 1.00)$ & $ (\lambda_{\min}, \lambda_{\max}) = (-0.35, 0.99)$ \\[0.2cm]
        \includegraphics[height=2.7cm]{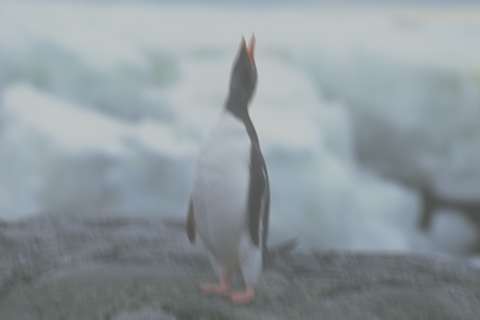} 
        & \includegraphics[height=2.7cm]{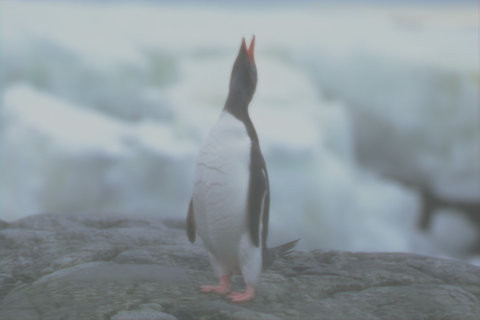}
        & \includegraphics[height=2.7cm]{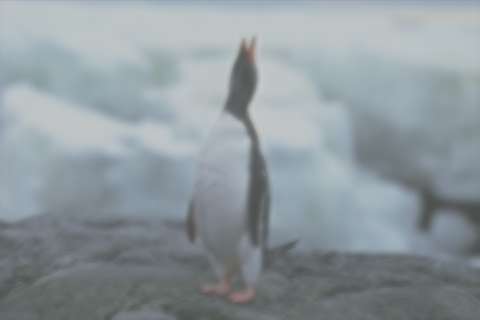} \\
        $y_{\nmonop} = \nmonop(\overline{x})$ & $y_{\monop} = \monop(\overline{x})$ & $y_{\monopsym} = \op(\overline{x})$  \\
        MAE($y, y_{\nmonop}$) $= 0.004$ & MAE($y, y_{\monop}$) $= 0.006$ & MAE($y, y_{\monopsym}$) $=0.003$ \\
        $ (\lambda_{\min}, \lambda_{\max}) = (-0.18, 0.63)$ & $(\lambda_{\min}, \lambda_{\max}) = (0.03, 0.61)$ & $(\lambda_{\min}, \lambda_{\max}) = (0.00, 0.55)$ \\[0.3cm]
        \includegraphics[height=2.7cm]{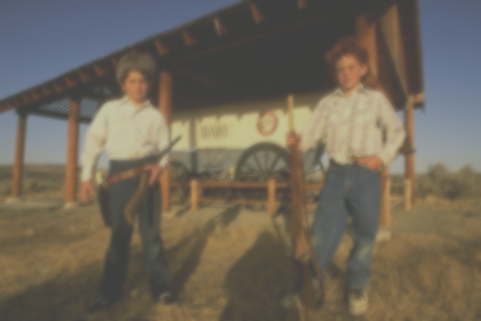}
        & \includegraphics[height=2.7cm]{Figures/Simulation/Results20231127/images_F/24/F_lin_x.png}
        & \includegraphics[height=2.7cm]{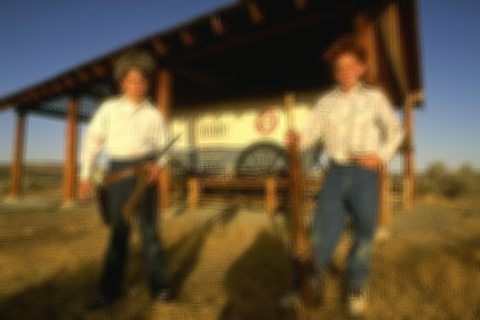} \\ 
        $y=F(\overline{x})$ & $y_{\Flin} = \Flin(\overline{x})$ & $y_{\linop}=\linop(\overline{x})$ \\
        PSNR $= 17.33$ & MAE($y, y_{\Flin}$) $= 0.110$ & MAE($y, y_{\linop}$) $=0.111$ \\ 
        $ (\lambda_{\min}, \lambda_{\max}) = (-156.86, 157.59)$ & $ (\lambda_{\min}, \lambda_{\max}) = (-0.09, 1.00)$ & $ (\lambda_{\min}, \lambda_{\max}) = (-0.35, 1.00)$ \\[0.2cm]
        \includegraphics[height=2.7cm]{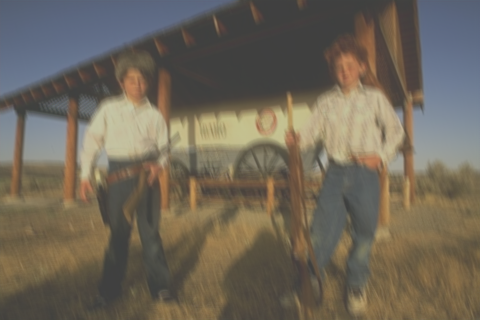} 
        & \includegraphics[height=2.7cm]{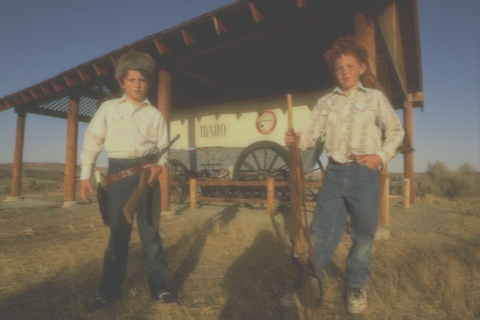}
        & \includegraphics[height=2.7cm]{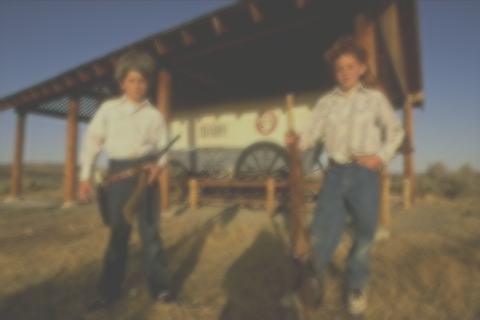} \\
        $y_{\nmonop} = \nmonop(\overline{x})$ & $y_{\monop} = \monop(\overline{x})$ & $y_{\monopsym} = \op(\overline{x})$  \\
        MAE($y, y_{\nmonop}$) $= 0.009$ & MAE($y, y_{\monop}$) $= 0.009$ & MAE($y, y_{\monopsym}$) $=0.004$ \\ 
        $ (\lambda_{\min}, \lambda_{\max}) = (-0.18, 0.63)$ & $(\lambda_{\min}, \lambda_{\max}) = (0.03, 0.61)$ &  $ (\lambda_{\min}, \lambda_{\max}) = (0.00, 0.56)$
    \end{tabular}
    
    \caption{\small
    Examples of output images obtained with the different versions of the measurement operator, with $\delta = 0.6$. First and third rows, left to right: true unknown operator $F$, true unknown linear approximation $\Flin$, learned linear approximation $\linop$. Second and fourth rows, left to right: learned non-monotone approximation $\nmonop$, proposed learned monotone approximation  $\monop$, and proposed relaxed monotone approximation $\monopsym$.
    All results are shown when training models without noise (i.e., $\sigma_{\rm train}=0$).
    }%
    \label{f:operatorexamples-0.6}
\end{figure}

\subsection{Training results}
\label{Sec:inverse-train-results}

In this section, we assess the abilities of the learned NNs in approximating the measurement operator $F$, as well as satisfying the monotonicity constraint necessary to the reconstruction problem. 
The results are provided in~Table~\ref{tab:results_trainings}. For each model, the average MAE values and the smallest eigenvalue $\min_{\overline{x}} \lambda_{\min} \big( \sympart{\Jac_{\op}}(\overline{x}) \big)$ are reported in the table, where $\overline{x}$ are cropped images of size $256 \times 256$ from the test set BSD68. 
Results are provided for the two considered training noise levels $\sigma_{\rm train} \in \{0, 0.01\}$. 
For $\monopsym$, the smallest eigenvalue was computed for the whole operator $\monopsym = {\linop}^\top \op$.
To ensure the monotonicity of the model, the smallest eigenvalue must be non-negative. 
The MAE values are computed between the non-noisy true output $y=F(\overline{x})$, and the output obtained through the learned network $\op(\overline{x})$.

On the one hand, we observe that $\nmonop$ performs slightly better in terms of MAE value than $\monop$, in particular when the $K=1$ filter is used. This behaviour is expected since the monotonicity constraint reduces the flexibility
of the model. We further observe that the noise level added during the training does not appear to have much impact on the results.
On the other hand, the learned model corresponding  to $\monopsym$ seems to better reproduce the true model $\F$, with MAE values closer to the non-monotone model. This is certainly due to the weaker constraint on its structure. 
Finally, $\linop$ is the worst approximation of $F$, and is not monotone.

Figure~\ref{f:operatorexamples-1} and Figure~\ref{f:operatorexamples-0.6} show the output images obtained when using different versions of the measurement operator, for two image examples, for $K=5$ with $\delta=1$ and $\delta=0.6$, respectively. Output images are provided for the true unknown operator $\F$, as well as for approximated operators $\Flin$ (true unknown linear approximation), $\linop$, $\nmonop$, $\monop$, and $\monopsym$. Results are shown for models trained without noise (i.e., $\sigma_{\rm train}=0$).
For each image, we give the MAE between the output from the approximated operator and the output from the true operator. Furthermore, values $(\lambda_{\min}, \lambda_{\max})$ are reported. We can observe that $\lambda_{\min} < 0$ for all operators apart from the monotone network $\monop$ and the relaxed version $\monopsym$. The true model $\F$, the linear approximation $\Flin$, and the learned linear approximation $\linop$ are all non-monotone. This shows that our method enables approximating a non-monotone operator using a monotone network.

\begin{table}[!t]
    \centering
    \setlength{\tabcolsep}{0.15cm}
    \small
    \begin{tabular}{l c l|r r | r r}
    \midrule
        & \multirow{2}{*}{Problem} & \multirow{2}{*}{Operator} & \multicolumn{2}{c|}{$\sigma_{\rm train}=0$} & \multicolumn{2}{c}{$\sigma_{\rm train}=0.01$} \\[0.1cm]
        &&& PSNR & SSIM & PSNR & SSIM  \\
    \midrule
    \parbox[t]{2mm}{\multirow{3}{*}{\rotatebox[origin=c]{90}{\scriptsize$(K, \delta)=(1, 1)$}}}
        & {\eqref{e:maininverse}} & {$\monop$} 
            &  $24.56 (\pm 3.96)$ &  $ 0.80 (\pm 0.11)$   
            &  $24.58 (\pm 4.26)$ &  $0.80 (\pm 0.11)$ \\[0.2cm]
        & {\eqref{e:maininverse-leastsquares}}& {$\monopsym$} 
            &  $26.32 (\pm 4.14)$ &  $0.85 (\pm 0.04)$ 
            &  $\best{28.31 (\pm 3.66)}$ &  $\best{0.89 (\pm 0.04)}$ \\[0.2cm]
        & {\eqref{e:maininverse-leastsquares}}  & {$\linopsym$} 
            &  $25.59 (\pm 3.14)$  &  $0.87 (\pm 0.07)$  
            & $25.59 (\pm 3.11)$  &  $0.87 (\pm 0.07)$ \\[0.2cm] 
    
    \midrule
    \parbox[t]{2mm}{\multirow{3}{*}{\rotatebox[origin=c]{90}{\scriptsize$(K, \delta)=(5, 1)$}}}
        & {\eqref{e:maininverse}}& {$\monop$} 
            & $27.46 (\pm 4.31)$ & $0.87 (\pm 0.08)$ 
            & $26.96 (\pm 4.13)$ & $0.86 (\pm 0.08)$ \\[0.2cm]
        & {\eqref{e:maininverse-leastsquares}}& {$\monopsym$} 
            &  $28.31 (\pm 4.32)$ &  $0.89 (\pm 0.06)$ 
            &  $\best{28.33 (\pm 4.33)}$ &  $\best{0.89 (\pm 0.06)}$ \\[0.2cm]
        & {\eqref{e:maininverse-leastsquares}}  & {$\linopsym$} 
            & $25.21 (\pm 3.29)$  &  $0.86 (\pm 0.08)$ 
            & $25.23 (\pm 3.32)$ & $0.86 (\pm 0.08)$ \\[0.2cm] 
    
    \midrule
    \parbox[t]{2mm}{\multirow{3}{*}{\rotatebox[origin=c]{90}{\scriptsize$(K, \delta)=(1, 0.6)$}}}
        &   {\eqref{e:maininverse}} & {$\monop$} 
            & $25.17 (\pm 3.99)$ & $0.81 (\pm 0.10)$  
            &  $25.14 (\pm 3.99)$ & $0.81 (\pm 0.10)$  \\[0.25cm]
        & {\eqref{e:maininverse-leastsquares}} & {$\monopsym$} 
            &  $25.33 (\pm 3.61)$ &  $0.81 (\pm 0.07)$ 
            &  $\best{26.09 (\pm 4.02)}$ &  $\best{0.83 (\pm 0.07)}$ \\[0.25cm]
        & {\eqref{e:maininverse-leastsquares}}  & {$\linopsym$} 
            & $18.77 (\pm 2.71)$ & $0.77 (\pm 0.12)$  
            & $18.77 (\pm 2.71)$ & $0.77 (\pm 0.12)$ \\[0.2cm] 
    
    \midrule
    \parbox[t]{2mm}{\multirow{3}{*}{\rotatebox[origin=c]{90}{\scriptsize$(K, \delta)=(5, 0.6)$}}}
        & {\eqref{e:maininverse}} & {$\monop$}
            &  $26.43 (\pm 4.23)$ & $\best{0.84 (\pm 0.09)}$ 
            &  $\best{26.63 (\pm 4.32)}$ & $\best{0.84 (\pm 0.09)}$ \\[0.25cm]
        & {\eqref{e:maininverse-leastsquares}} & {$\monopsym$} 
            &  $24.75 (\pm 4.33)$ &  $0.77 (\pm 0.13)$ 
            & $24.73 (\pm 4.32)$ & $0.77 (\pm 0.13)$ \\[0.25cm] 
        & {\eqref{e:maininverse-leastsquares}}  & {$\linopsym$}
            & $18.40 (\pm 2.74)$ & $0.72 (\pm 0.14)$ 
            & $18.40 (\pm 2.74)$ & $0.72 (\pm 0.14)$ \\[0.2cm] 
    \midrule
    \end{tabular}

    \caption{\small Results for low noise level $\sigma=0.01$: Average PSNR values (and standard deviation), obtained over $10$ images of BSD68, for solving the original inverse problem~\eqref{e:generalinverseproblem}, with $K \in \{1, 5\}$ and $\delta \in \{ 1, 0.6 \}$. 
    Results are shown when solving~\eqref{e:maininverse} with $\monop$,~\eqref{e:maininverse-leastsquares} with $\monopsym$, and~\eqref{e:maininverse-leastsquares} with $\linopsym$. PSNR and SSIM values correspond to the best results obtained when optimizing the regularization parameter~$\varrho$.
    }
    \label{tab:restoration_results-psnr-01}
\end{table}

\begin{table}[!t]
    \centering
    \setlength{\tabcolsep}{0.15cm}
    \small
    \begin{tabular}{l c l|r r | r r}
    \midrule
        & \multirow{2}{*}{Problem} & \multirow{2}{*}{Operator} & \multicolumn{2}{c|}{$\sigma_{\rm train}=0$} & \multicolumn{2}{c}{$\sigma_{\rm train}=0.01$} \\[0.1cm]
        &&& PSNR & SSIM & PSNR & SSIM  \\
    \midrule
    \parbox[t]{2mm}{\multirow{3}{*}{\rotatebox[origin=c]{90}{\scriptsize$(K, \delta)=(1, 1)$}}}
        & {\eqref{e:maininverse}} & {$\monop$} 
            &  $23.99 (\pm 3.54)$ &  $ 0.76 (\pm 0.13)$   
            &  $24.09 (\pm 3.56)$ &  $0.76 (\pm 0.13)$ \\[0.2cm]
        & {\eqref{e:maininverse-leastsquares}}& {$\monopsym$} 
            &  $24.95 (\pm 3.29)$ &  $0.78 (\pm 0.08)$ 
            &  $\best{25.44 (\pm 3.57)}$ &  $\best{0.80 (\pm 0.08)}$ \\[0.2cm]
        & {\eqref{e:maininverse-leastsquares}}  & {$\linopsym$} 
            &  $23.56 (\pm 3.04)$  &  $0.79 (\pm 0.11)$  
            & $23.60 (\pm 3.07)$  &  $0.79 (\pm 0.11)$ \\[0.2cm] 
    
    \midrule
    \parbox[t]{2mm}{\multirow{3}{*}{\rotatebox[origin=c]{90}{\scriptsize$(K, \delta)=(5, 1)$}}}
        & {\eqref{e:maininverse}}& {$\monop$} 
            & $24.28 (\pm 3.78)$ & $0.77 (\pm 0.13)$ 
            & $24.31 (\pm 3.80)$ & $0.77 (\pm 0.13)$ \\[0.2cm]
        & {\eqref{e:maininverse-leastsquares}}& {$\monopsym$} 
            &  $25.70 (\pm 4.17)$ &  $0.81 (\pm 0.10)$ 
            &  $\best{25.72 (\pm 4.21)}$ &  $\best{0.81 (\pm 0.11)}$ \\[0.2cm]
        & {\eqref{e:maininverse-leastsquares}}  & {$\linopsym$} 
            & $23.44 (\pm 3.33)$  &  $0.78 (\pm 0.12)$ 
            & $23.44 (\pm 3.33)$ & $0.78 (\pm 0.12)$ \\[0.2cm] 
    
    \midrule
    \parbox[t]{2mm}{\multirow{3}{*}{\rotatebox[origin=c]{90}{\scriptsize$(K, \delta)=(1, 0.6)$}}}
        &   {\eqref{e:maininverse}} & {$\monop$} 
            & $21.67 (\pm 2.89)$ & $0.73 (\pm 0.15)$  
            & $21.75 (\pm 2.94)$ & $0.73 (\pm 0.15)$   \\[0.25cm]
        & {\eqref{e:maininverse-leastsquares}} & {$\monopsym$} 
            &  $24.52 (\pm 3.68)$ &  $0.77 (\pm 0.11)$ 
            &  $\best{24.60 (\pm 3.72)}$ &  $\best{0.77 (\pm 0.11)}$ \\[0.25cm]
        & {\eqref{e:maininverse-leastsquares}}  & {$\linopsym$}
            & $18.51 (\pm 2.65)$ & $0.72 (\pm 0.14)$ 
            & $18.52 (\pm 2.66)$ & $0.72 (\pm 0.14)$ \\[0.2cm] 
    
    \midrule
    \parbox[t]{2mm}{\multirow{3}{*}{\rotatebox[origin=c]{90}{\scriptsize$(K, \delta)=(5, 0.6)$}}}
        & {\eqref{e:maininverse}} & {$\monop$}
            &  $21.72 (\pm 3.03)$ & $0.73 (\pm 0.15)$ 
            &  $21.76 (\pm 3.06)$ & $0.73 (\pm 0.15)$ \\[0.25cm]
        & {\eqref{e:maininverse-leastsquares}} & {$\monopsym$} 
            &  $24.29 (\pm 3.99)$ &  $\best{0.76 (\pm 0.12)}$ 
            & $\best{24.30 (\pm 4.00)}$ & $\best{0.76 (\pm 0.12)}$ \\[0.25cm] 
        & {\eqref{e:maininverse-leastsquares}}  & {$\linopsym$}
            & $18.36 (\pm 2.71)$ & $0.70 (\pm 0.16)$ 
            & $18.36 (\pm 2.71)$ & $0.70 (\pm 0.16)$  \\[0.2cm] 
    \midrule
    \end{tabular}

    \caption{\small 
    Results for high noise level $\sigma=0.05$: Average PSNR values (and standard deviation), obtained over $10$ images of BSD68, for solving the original inverse problem~\eqref{e:generalinverseproblem}, with $K \in \{1, 5\}$ and $\delta \in \{ 1, 0.6 \}$. 
    Results are shown when solving~\eqref{e:maininverse} with $\monop$,~\eqref{e:maininverse-leastsquares} with $\monopsym$, and~\eqref{e:maininverse-leastsquares} with $\linopsym$. PSNR and SSIM values correspond to the best results obtained when optimizing the regularization parameter~$\varrho$.
    }
    \label{tab:restoration_results-psnr-05}
\end{table}

\subsection{Restoration results}

In this section, we consider the original inverse problem~\eqref{e:generalinverseproblem}, with model~\eqref{e:generalF} for four cases: $(K, \delta) \in \{ (1, 1), (5, 1) , (1, 0.6), (5, 0.6) \}$. We further consider two noise levels on the inverse problem: $\sigma=0.01$ (low noise level) and $\sigma=0.05$ (high noise level).
Simulations are run on $10$ images from the BSD68 dataset. 
We compare the three methods described in Section~\ref{sssec:sim:image:mono} that hold convergence guarantees.
Precisely, we solve \eqref{e:maininverse} with $\monop$, \eqref{e:maininverse-leastsquares} with $\monopsym$, and \eqref{e:maininverse-leastsquares} with $\linopsym$. 
For the regularization $r$, we choose a smoothed Total Variation term \cite{getreuer_rudin-osher-fatemi_2012} (See Section~\ref{sec:smooth-tv} for details),
and we manually choose the regularization parameter $\varrho$ to achieve the best reconstruction quality for each method.

Quantitative results with average PSNR and SSIM values obtained for each method, in each setting, are reported in Tables \ref{tab:restoration_results-psnr-01} and \ref{tab:restoration_results-psnr-05}, for $\sigma=0.01$ and $\sigma=0.05$, respectively. 
We can observe overall  that the proposed least-squares approach $\monopsym$ always outperforms its linear counterpart $\linopsym$. It also performs better than $\monop$ for $K=1$, and similarly for $K=5$.
Regarding the noise level for training $\monop$ and $\monopsym$, either choice $\sigma_{\rm train}=0$ or
$\sigma_{\rm train}=0.01$ lead to very similar reconstruction results. Hence, the learned monotone approximation of $F$ does not seem affected by the noise level on the training dataset.

Qualitative results are presented in Figures \ref{fig:restoration_example_delta=1} and \ref{fig:restoration_example_delta=0.6} for the low noise level ($\sigma=0.01$), $K=5$ with $\delta = 1$ and $\delta=0.6$, respectively. 
Each figure shows results for two images, obtained by selecting the regularization parameter $\varrho$ leading to the best reconstruction quality. 
Observations for these two images validate the quantitative results reported in Table~\ref{tab:restoration_results-psnr-01}. 
The linear least squares procedure using $\Flinsym$ leads to the worst performance due to method failing to correct for the saturation function. 
For the high saturation case $\delta=0.6$, we see that $\monop$ leads to slightly sharper reconstructions, while both least-squares versions $\monopsym$ and $\linopsym$ seem to over-smooth the reconstruction, possibly due to the bias introduced with respect to the true saturation model.

We also present the convergence profiles $\| x_{k+1} - x_k \|_2 / \|y\|_2$ with respect to the iterations $k$ of the restored images using the three considered models $\monop$, $\monopsym$ and $\linopsym$, with two training noise levels $\sigma_{\rm train}\in \{0, 0.01\}$. We observe that all methods exhibit a converging behaviour.  
The oscillations are due to the Goldstein-Armijo rule that introduces a backtracking line search procedure in the algorithms to find the optimal step size. Smoother curves can be obtained by lowering the $\beta$ parameters in~\eqref{def:armijo}, with the caveat of more processing time due to the additional steps performed within the backtracking line search procedure.

\begin{figure}[ht]
    \scalebox{1}{
    \centering
    \scriptsize
    \setlength{\tabcolsep}{0.01cm}
    \begin{tabular}{cccc}\zoomedinimagebis{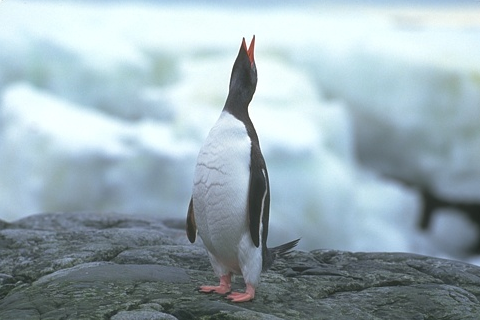}{210px 200px 210px 25px}
        & \zoomedinimagebis{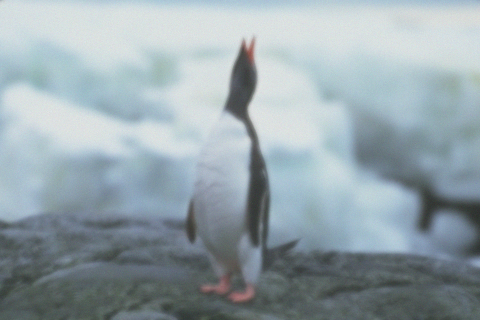}{210px 200px 210px 25px}
        &   \zoomedinimagebis{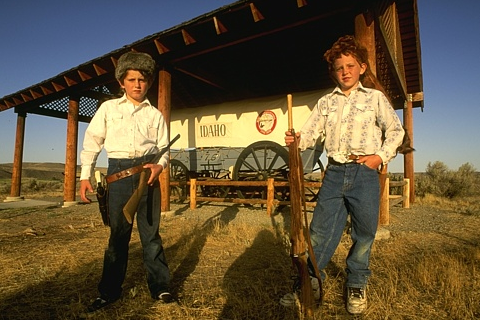}{193px 155px 243px 115px}
        &   \zoomedinimagebis{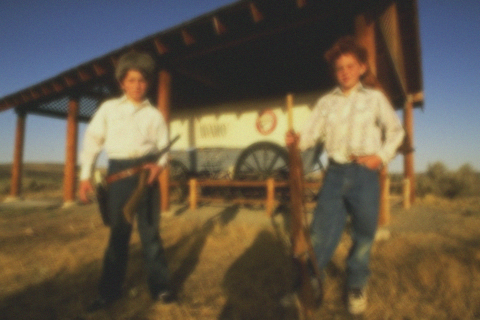}{193px 155px 243px 115px} \\
        $\overline{x}$ & $y$ -- $(23.96, 0.89)$   
        &   $\overline{x}$ & $y$ -- $(21.11 , 0.65)$   \\[0.1cm]

        \zoomedinimagebis{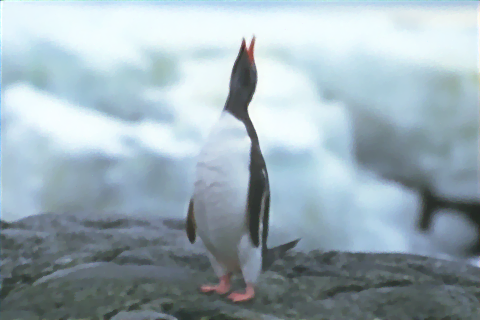}{210px 200px 210px 25px}
        &   \zoomedinimagebis{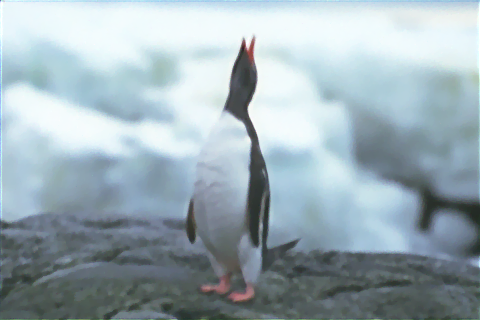}{210px 200px 210px 25px}  
        &   \zoomedinimagebis{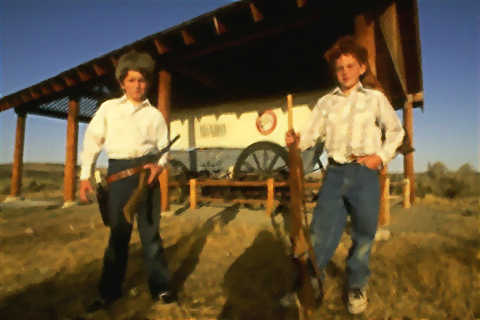}{193px 155px 243px 115px}
        &   \zoomedinimagebis{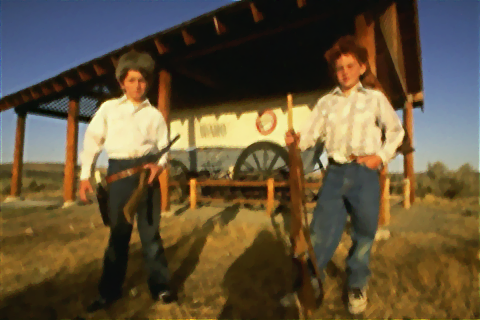}{193px 155px 243px 115px}  \\
        $\widehat{x}_{\monop}$ -- $(31.59, 0.93)$ & $\widehat{x}_{\monop}$ -- $(30.98, 0.93)$ 
        &   $\widehat{x}_{\monop}$ -- $(24.95,0.80)$ & $\widehat{x}_{\monop}$ -- $(24.67, 0.80)$ \\
        $\sigma_{\rm train}=0$ & $\sigma_{\rm train}=0.01$
        &   $\sigma_{\rm train}=0$ & $\sigma_{\rm train}=0.01$ \\[0.1cm]

        \zoomedinimagebis{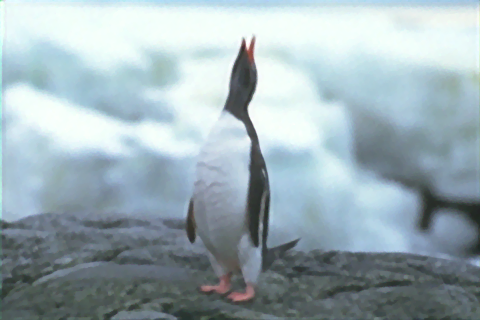}{210px 200px 210px 25px}
        &   \zoomedinimagebis{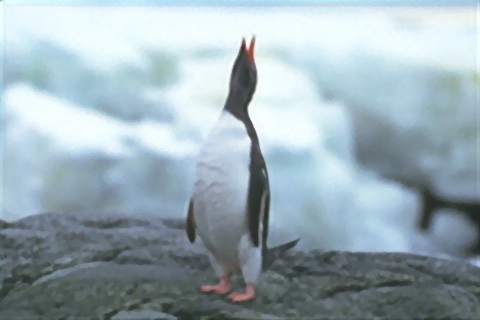}{210px 200px 210px 25px}  
        &   \zoomedinimagebis{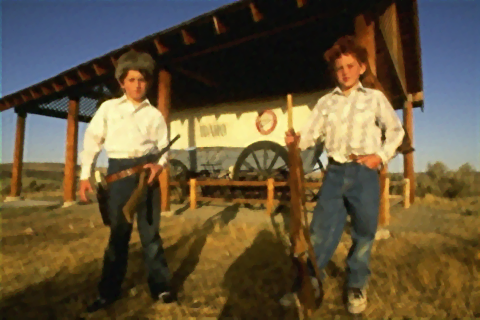}{193px 155px 243px 115px}
        &   \zoomedinimagebis{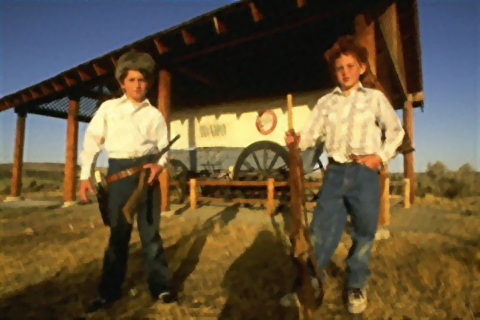}{193px 155px 243px 115px}  \\
        $\widehat{x}_{\monopsym}$ -- $(31.84, 0.94)$ & $\widehat{x}_{\monopsym}$ -- $(32.10, 0.94)$ 
        &   $\widehat{x}_{\monopsym}$ -- $(25.75,0.83)$ & $\widehat{x}_{\monopsym}$ -- $(25.72, 0.83)$ \\
        $\sigma_{\rm train}=0$
        & $\sigma_{\rm train}=0.01$
        & $\sigma_{\rm train}=0$
        & $\sigma_{\rm train}=0.01$ \\[0.1cm]

        \zoomedinimagebis{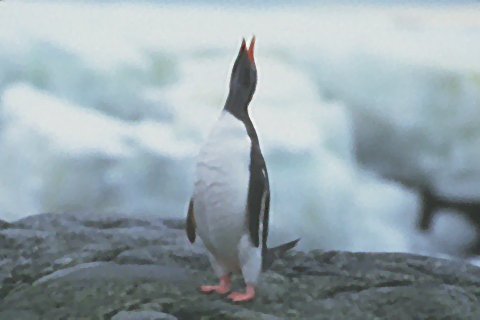}{210px 200px 210px 25px}
        &   \zoomedinimagebis{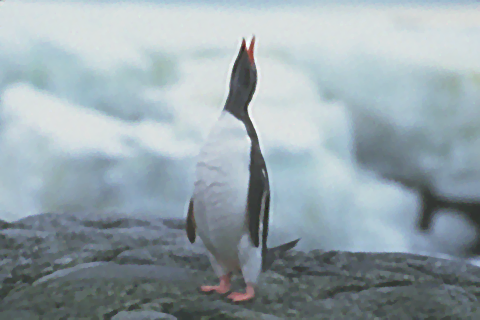}{210px 200px 210px 25px}  
        &   \zoomedinimagebis{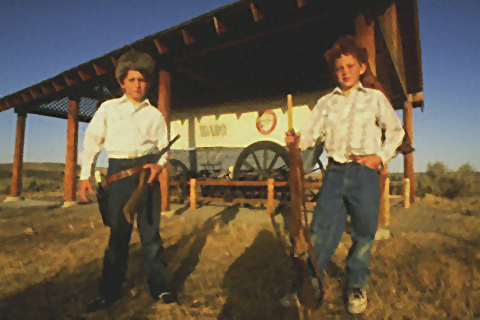}{193px 155px 243px 115px}
        &   \zoomedinimagebis{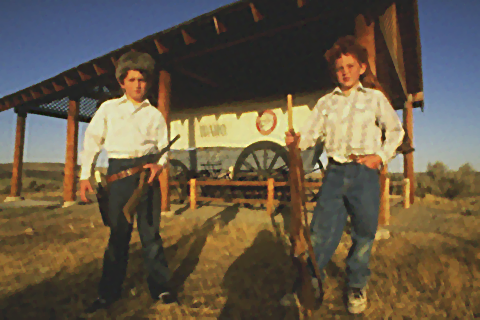}{193px 155px 243px 115px}  \\
        $\widehat{x}_{\linopsym}$ -- $(24.67, 0.93)$ 
        & $\widehat{x}_{\linopsym}$ -- $(24.69, 0.93)$ 
        & $\widehat{x}_{\linopsym}$ -- $(22.77,0.75)$ 
        & $\widehat{x}_{\linopsym}$ -- $(22.79, 0.75)$ \\
        $\sigma_{\rm train}=0$
        & $\sigma_{\rm train}=0.01$
        & $\sigma_{\rm train}=0$
        & $\sigma_{\rm train}=0.01$ \\[0.3cm]

        
        \multicolumn{2}{c}{\includegraphics[width=0.49\textwidth]{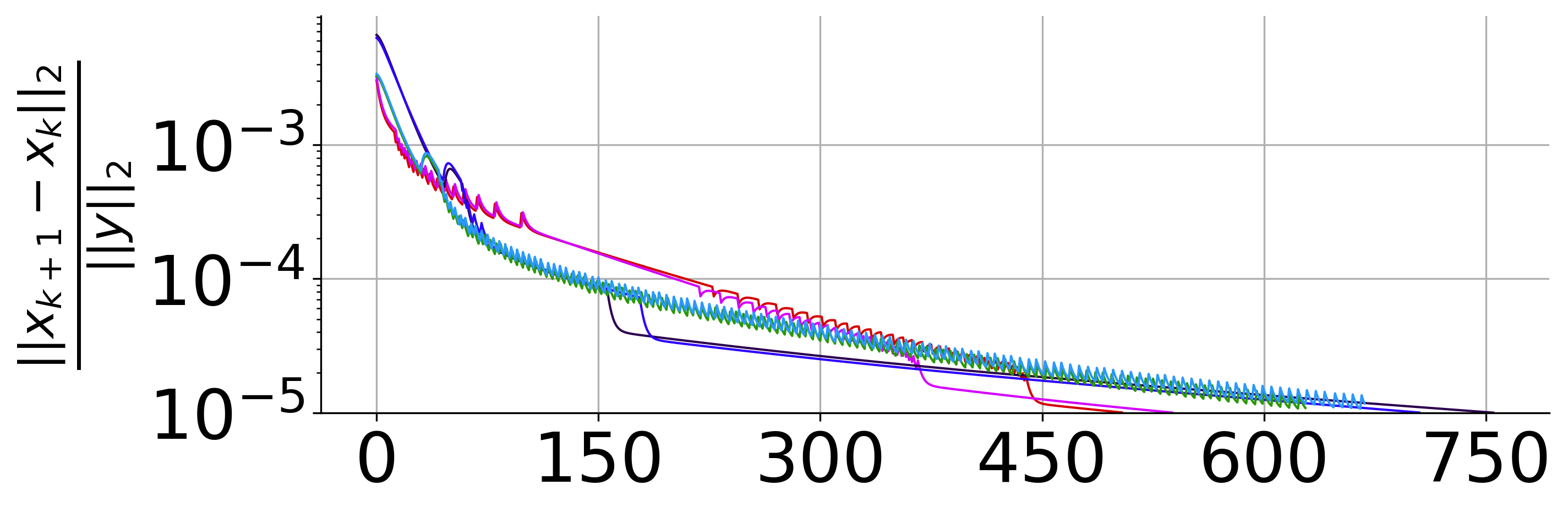}}
        &   \multicolumn{2}{c}{\includegraphics[width=0.49\textwidth]{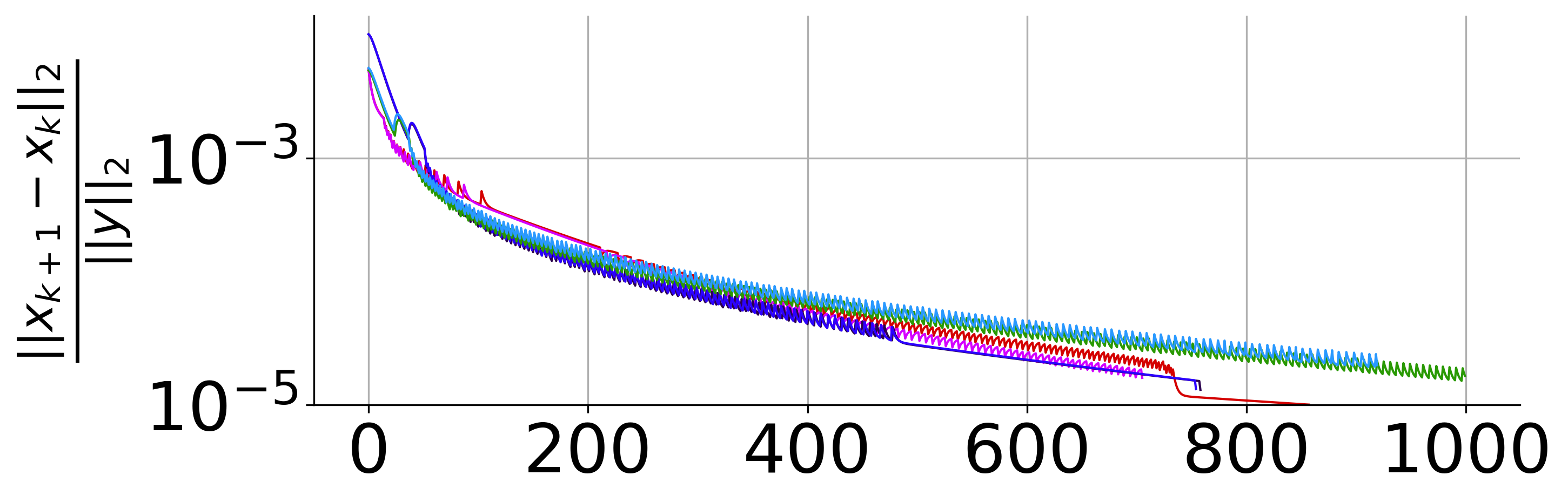}} \\ 
        \multicolumn{4}{c}{\includegraphics[width=0.6\textwidth]{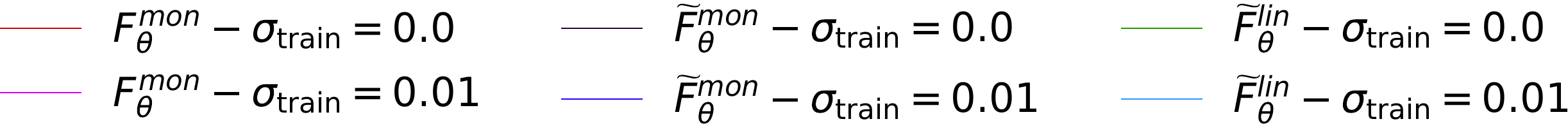}}
    \end{tabular}
    }

    \centering
    \caption{\small
    Results for low noise level $\sigma=0.01$: Restoration results for $K=5$ and $\delta = 1$, for two images. For each image and method, we provide (PSNR, SSIM) values between the solution and $\overline{x}$. Last row shows the convergence profiles associated with the reconstruction of each image, for the three considered models.
    }
    \label{fig:restoration_example_delta=1}
\end{figure}

\begin{figure}[!ht]
    \centering
    \scriptsize
    \setlength{\tabcolsep}{0.01cm}
    \begin{tabular}{cccc}
        \zoomedinimagebis{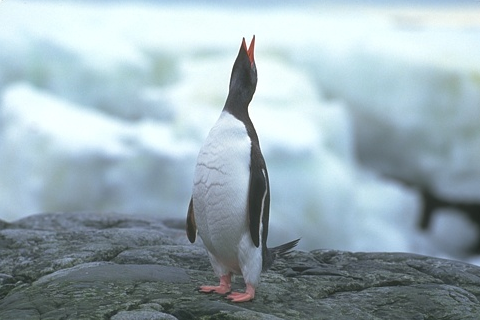}{210px 200px 210px 25px} 
        & \zoomedinimagebis{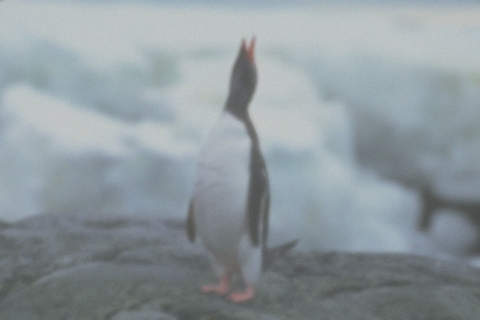}{210px 200px 210px 25px} 
        & \zoomedinimagebis{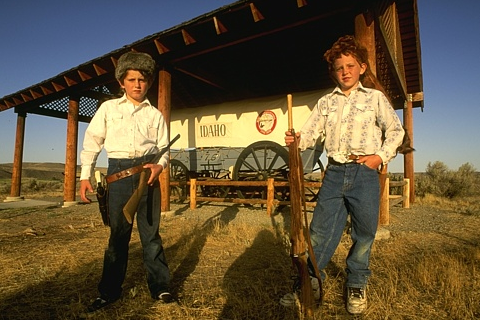}{193px 155px 243px 115px}
        & \zoomedinimagebis{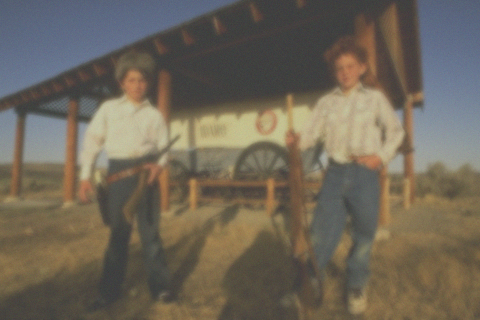}{193px 155px 243px 115px} \\
        $\overline{x}$ 
        & $y$ -- $(17.30, 0.85)$ 
        & $\overline{x}$ 
        &  $y$ -- $(16.27, 0.54)$ \\

        \zoomedinimagebis{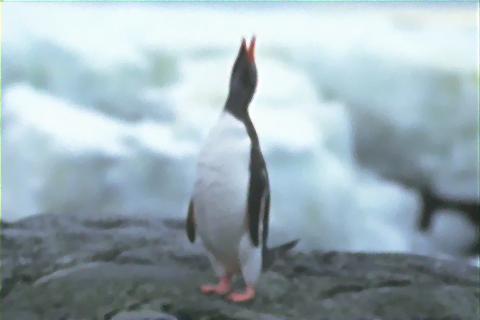}{210px 200px 210px 25px} 
        & \zoomedinimagebis{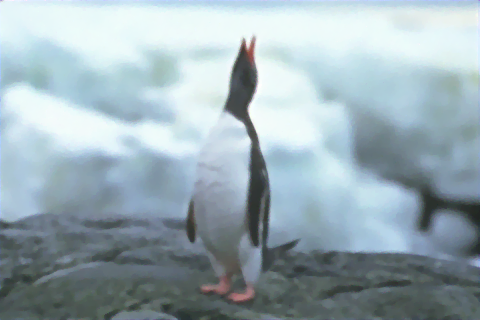}{210px 200px 210px 25px}
        & \zoomedinimagebis{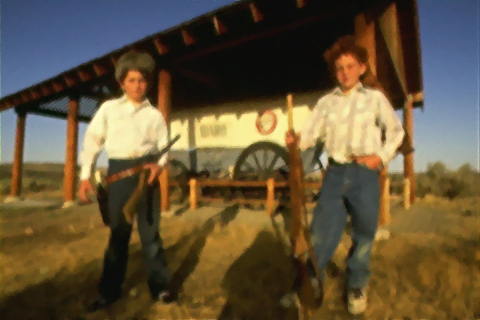}{193px 155px 243px 115px} 
        & \zoomedinimagebis{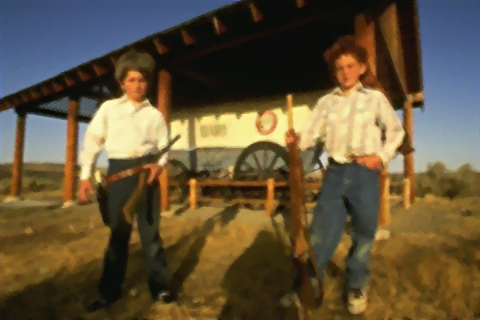}{193px 155px 243px 115px} \\ 
        $\widehat{x}_{\monop}$ -- $(30.64, 0.92)$ 
        & $\widehat{x}_{\monop}$ -- $(30.59, 0.92)$
        & $\widehat{x}_{\monop}$ -- $(24.12, 0.77)$ 
        & $\widehat{x}_{\monop}$ -- $(24.21, 0.78)$ \\ 
        $\sigma_{\rm train}=0$
        & $\sigma_{\rm train}=0.01$
        & $\sigma_{\rm train}=0$
        & $\sigma_{\rm train}=0.01$ \\[0.1cm]
        
        \zoomedinimagebis{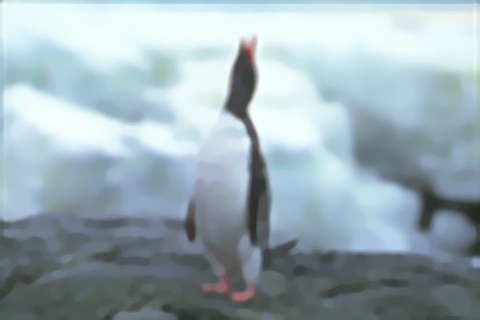}{210px 200px 210px 25px}
        &   \zoomedinimagebis{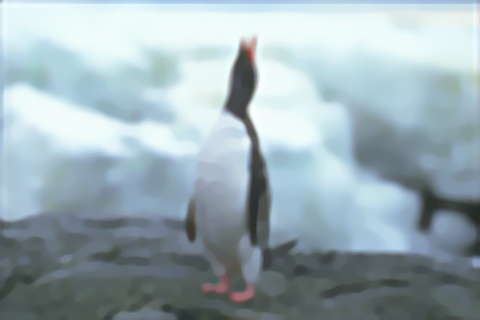}{210px 200px 210px 25px}  
        &   \zoomedinimagebis{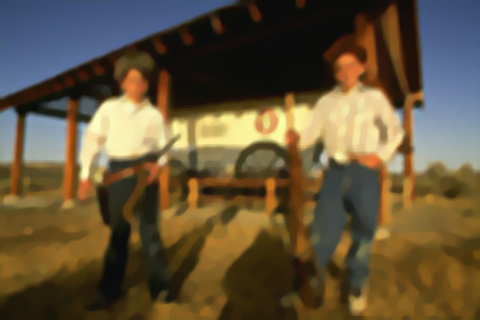}{193px 155px 243px 115px}
        &  \zoomedinimagebis{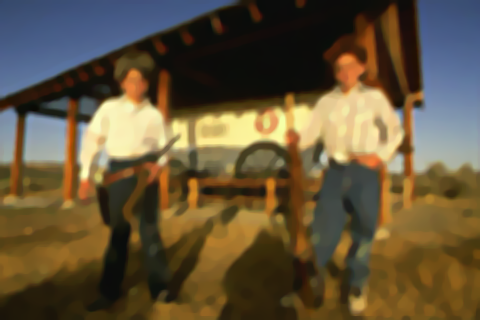}{193px 155px 243px 115px}  \\
        $\widehat{x}_{\monopsym}$ -- $(28.91, 0.89)$ & $\widehat{x}_{\monopsym}$ -- $(28.87, 0.89)$ 
        &   $\widehat{x}_{\monopsym}$ -- $(22.55,0.7)$ & $\widehat{x}_{\monopsym}$ -- $(22.54, 0.7)$ \\
        $\sigma_{\rm train}=0$
        & $\sigma_{\rm train}=0.01$
        & $\sigma_{\rm train}=0$
        & $\sigma_{\rm train}=0.01$ \\[0.1cm]

        \zoomedinimagebis{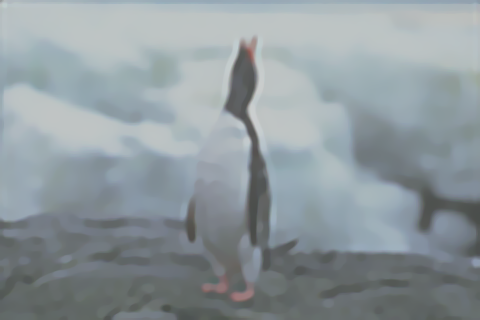}{210px 200px 210px 25px}
        &   \zoomedinimagebis{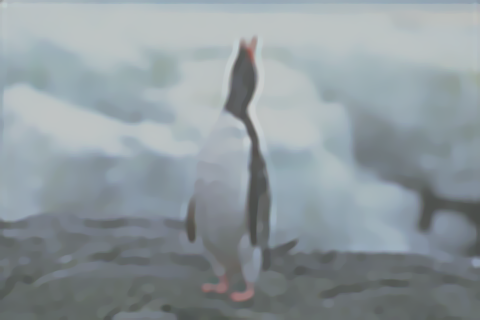}{210px 200px 210px 25px}  
        &   \zoomedinimagebis{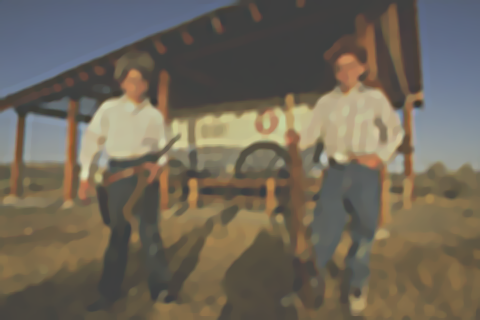}{193px 155px 243px 115px}
        &   \zoomedinimagebis{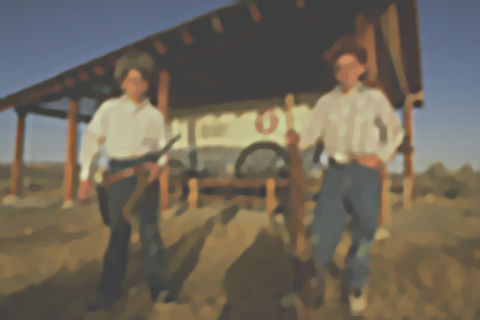}{193px 155px 243px 115px}  \\
        $\widehat{x}_{\linopsym}$ -- $(17.40, 0.86)$ 
        & $\widehat{x}_{\linopsym}$ -- $(17.40, 0.86)$ 
        & $\widehat{x}_{\linopsym}$ -- $(16.42, 0.54)$ 
        & $\widehat{x}_{\linopsym}$ -- $(16.41, 0.54)$ \\
        $\sigma_{\rm train}=0$
        & $\sigma_{\rm train}=0.01$
        & $\sigma_{\rm train}=0$
        & $\sigma_{\rm train}=0.01$ \\[0.3cm]

        \multicolumn{2}{c}{\includegraphics[width=0.49\textwidth]{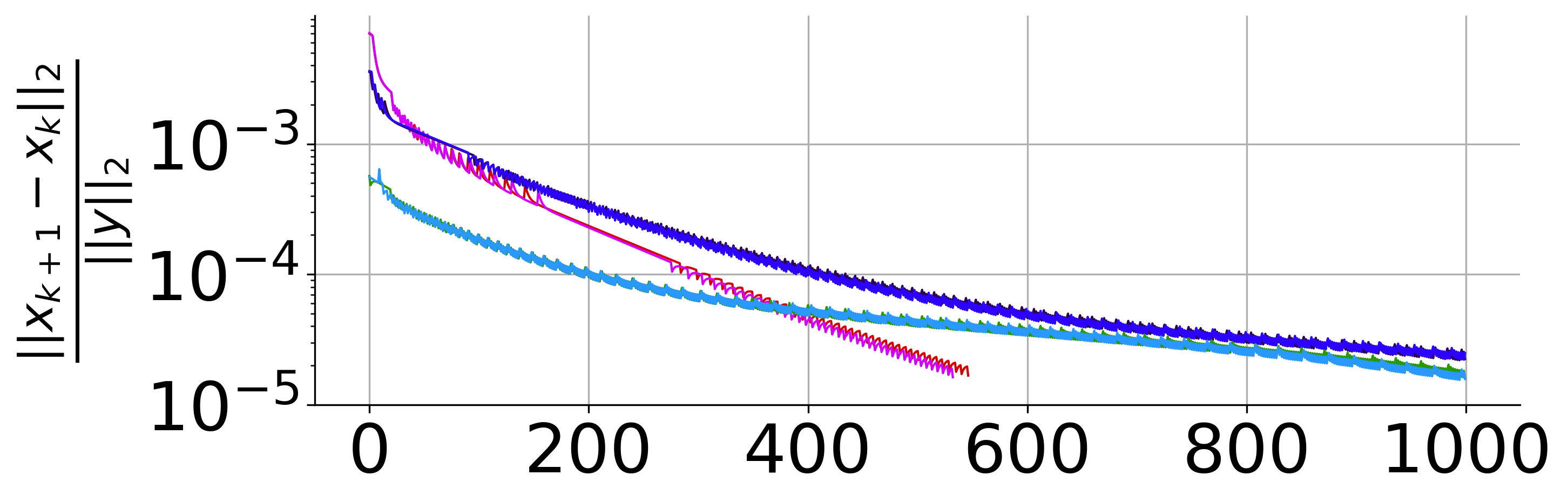}}
        &   \multicolumn{2}{c}{\includegraphics[width=0.49\textwidth]{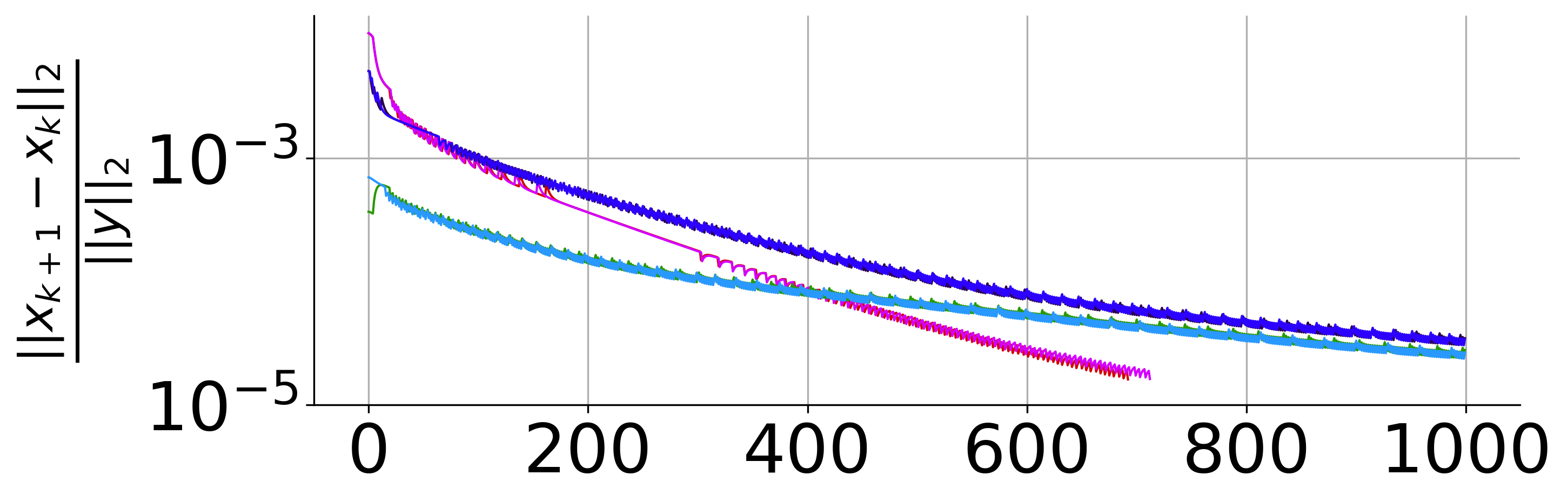}} \\ 
        \multicolumn{4}{c}{\includegraphics[width=0.49\textwidth]{Figures/Simulation/Results20240201/noisy/legend.pdf}}
    \end{tabular}

    \centering
    \caption{\small 
    Results for low noise level $\sigma=0.01$: 
    Restoration results for $K=5$ and $\delta = 0.6$, for two images. For each image and method, we provide (PSNR, SSIM) values between the solution and $\overline{x}$. Last row shows the convergence profiles associated with the reconstruction of each image, for the three considered models.
    }
    \label{fig:restoration_example_delta=0.6}
\end{figure}


\section{Conclusions}
\label{Sec:conclusions}

In this article, we introduced a novel approach for training monotone neural networks, by designing a penalized training procedure. The resulting monotone networks can then be embedded in the FBF algorithm, within a PnP framework, yet ensuring the convergence of the iterates of the resulting algorithm. This method can be leveraged for addressing a wide range of monotone inclusion problems, including in imaging, as demonstrated in the context of solving non-linear inverse imaging problems.

The proposed PnP-FBF method enables solving generic constrained monotone inclusion problem. 
Its convergence is ensured even if the involved operators are not cocoercive. Hence, the proposed method can be used for a wider class of operators compared to other similar iterative schemes such as the (PnP) forward-backward method. Moreover, combined with the Armijo-Goldstein rule, the proposed FBF-PnP algorithm is guaranteed to converge without needing an explicit computation of the Lipschitz constant of the neural network.

The proposed penalized training procedure enables us to learn monotone neural networks. To do so, we designed a differentiable penalization function, relying on properties of the network Jacobian, that can be implemented efficiently using auto-differentiation tools. The proposed training approach is very flexible and can be applied to a wide range of network architectures and training paradigms. 

We finally illustrated the benefit of the proposed framework in learning monotone operators for semi-blind non-linear deconvolution imaging problems. Our methodology demonstrated an accurate monotone approximation of the true non-monotone degradation function. We show that the monotonocity of the learned network is further instrumental within the proposed PnP-FBF scheme for inverting the network, and solving the image recovery problem. 

The versatility of our methodology allows its potential extension to various imaging problems involving a nonlinear model, including super-resolution.
The practical utility of 
monotone operators has already been explored in normalizing flows, offering possibilities for extending our work to monotone normalizing flows \cite{ahn_invertible_2022}. Further investigations into style transfer tasks, leveraging the invertibility property, could yield stable neural networks for image-to-image mapping problems. Finally, the application of non-linear inverse problems in tomography, acknowledging the inherent saturation in tissue absorption of X-rays, offers a promising avenue for future exploration.


\section*{Acknowledgments}
We would like to thank the DATAIA institute from University Paris-Saclay for funding the doctoral work of YB. The work of JCP was founded by the ANR Research and Teaching Chair in Artificial Intelligence, BRIDGEABLE. The work of AR was partly funded by the Royal Society of Edinburgh; EPSRC grant EP/X028860/1; the CVN, Inria/OPIS, and CentraleSup\'elec.


\bibliographystyle{plain}
\bibliography{abbr,references}


\appendix

\section{Saturation function}
\label{Sec:SaturationFunction}

We modify the classical Tanh function to be centered on 0.5 and to take as input images with pixel values in the range $[0, 1]$. 
We also introduce a scaling factor $\delta$ to accentuate the non-linearity of the tanh (Figure~\ref{figure:SaturationFigure}). 
The resulting function is then used for saturation, and is defined as 
\begin{equation}\label{e:SaturationUpsilonDef}
    S_\delta \colon \defspace \to  \defspace \colon
    \boldsymbol{x}=(x_i)_{1\le i \le n} \mapsto  (\psi_\delta(x_i))_{1 \le i \le n} 
\end{equation}
where
\begin{equation}
    \psi_\delta \colon \RR \to \RR \colon
    x \mapsto  \frac{\text{tanh}(\delta (2x - 1)) + 1}{2} .
\end{equation}
Examples for $\delta\in \{0.6, 1\}$ are shown in Figure~\ref{figure:SaturationFigure}.

\begin{figure}[h]
    \centering
    \includegraphics[width=0.5\linewidth]{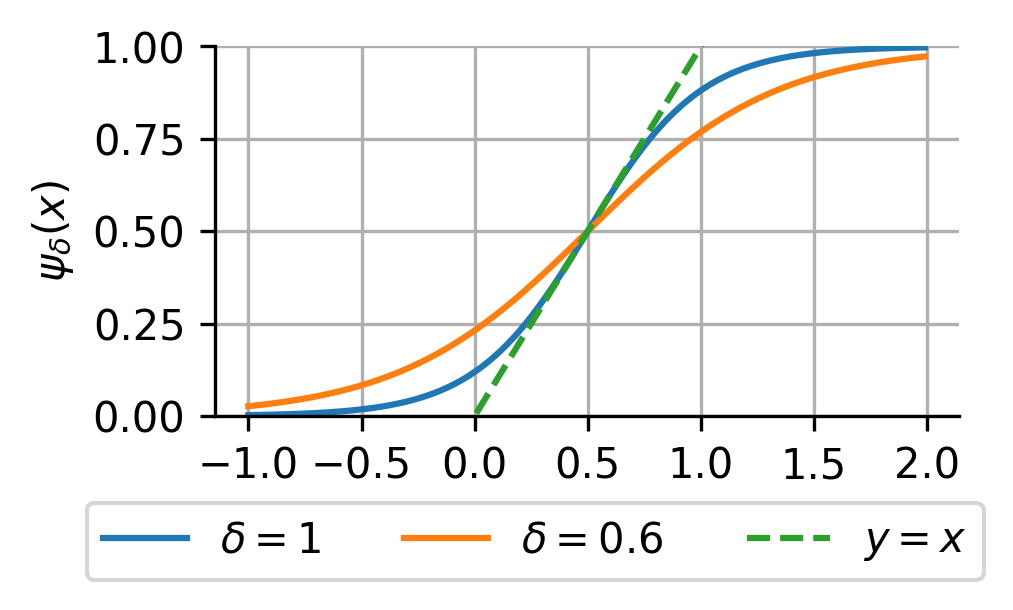}
    \caption{$\psi_\delta$ functions used in $S_\delta$ in the experiments, for $\delta \in \{0.6, 1\}$ (blue and orange, respectively). The green line represents the  identity function $f(x) = x$, highlighting the non-linearity of the saturation function.}
    \label{figure:SaturationFigure}
\end{figure}

\section{Linear approximation of a convolution filter}\label{sec:linear-approx}

As explained in Section~\ref{sssec:sim:image:mono}, we compare our model to a learned linear approximation $\linop$ of $\F$.  
To this aim, we define $\linop$ as a 2D convolution layer, with convolution kernel $f_\theta \in \mathbb R^D$. Then, we train $\linop$ by solving 
\begin{equation}\label{e:linear-approx-objective-repara}
    \argmin_{\theta \in \RR^D} \Sum_{(x, y) \in \mathbb{D}_{\rm train}}\| \upsilon(f_\theta) \ast x - y \|_1,
\end{equation}
where 
\begin{equation}
    \displaystyle \upsilon \colon \mathbb R^D \to \mathbb R^D
    \colon f_\theta \mapsto 
    \frac{\text{ReLU}(f_\theta)}{\sum_{i \in 1}^D \max (0, f_{\theta_i})}.
\end{equation}
This parametrization is introduced as we assume that the true convolution kernels in~\eqref{e:generalF} are normalized and nonnegative.

Problem~\eqref{e:linear-approx-objective-repara} is then solved using Adam optimizer for 200 epochs, with a learning rate of $0.02$. The size of the kernels was chosen equal to the size of the true kernels in~\eqref{e:generalF} for simplicity (i.e., $D=9 \times 9$). The learned kernels are displayed in Figure~\ref{figure:learned-kernels}. 
We can observe that for the normal saturation levels ($\delta = 1$), the approximated kernels are almost equal to the true ones, with or without noise. For the high saturation level ($\delta = 0.6$) however, the learned kernels are very different from the true ones, certainly to try to compensate for the nonlinear saturation function.

\begin{figure}[!h]
    \centering\footnotesize
    \setlength{\tabcolsep}{0.1cm}
    \begin{tabular}{cccccc}
    & \multirow{2}{*}{True kernel} 
    & \multicolumn{2}{c}{$\delta = 1$}
    & \multicolumn{2}{c}{$\delta = 0.6$} \\ 

    &  
    & $\sigma_{\rm train} = 0$
    & $\sigma_{\rm train} = 0.01$
    & $\sigma_{\rm train} = 0$
    & $\sigma_{\rm train} = 0.01$ \\ 
    
    \rotatebox[origin=lB]{90}{\quad\quad$K=1$}
    & \includegraphics[width=0.15\linewidth]{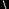}
    &   \includegraphics[width=0.15\linewidth]{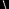}
    &   \includegraphics[width=0.15\linewidth]{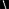}
    &   \includegraphics[width=0.15\linewidth]{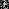}
    &   \includegraphics[width=0.15\linewidth]{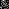} \\

    \rotatebox[origin=lB]{90}{\quad\quad$K=5$}
    & \includegraphics[width=0.15\linewidth]{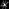}
    &   \includegraphics[width=0.15\linewidth]{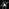}
    &   \includegraphics[width=0.15\linewidth]{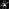}
    &   \includegraphics[width=0.15\linewidth]{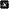}
    &   \includegraphics[width=0.15\linewidth]{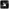} \\
    \end{tabular}

    \vspace*{-0.2cm}
    \caption{\small Learned normalized kernels $\upsilon(f_\theta)$ for the different settings considered in Section~\ref{Sec:simulations}.}%
    \label{figure:learned-kernels}
\end{figure}

\section{Total variation regularization}\label{sec:smooth-tv}

We provide here the details of the smoothed approximation used in the manuscript for the Total Variation \cite{getreuer_rudin-osher-fatemi_2012} function. 
We define
\begin{equation}
    (\forall x \in \defspace)\quad
    r(x) = \sum_{i = 1}^n \sqrt{[\nabla_h x]_i^2 + [\nabla_v x]_i^2 + \varepsilon}
\end{equation}
where $\nabla_h \colon \defspace \to \defspace$ and $\nabla_v \colon \defspace \to \defspace$ model the linear horizontal and vertical gradient operators, respectively, and $\varepsilon>0$ is a smoothing parameter. Then, the gradient of $r$ is given by
\begin{equation}
    \displaystyle (\forall x \in \defspace)\quad
    \nabla r(x) = \left( \dfrac{ \Big[ \nabla_h^\top (\nabla_h x) + \nabla_v^\top (\nabla_v x) \Big]_i}{\sqrt{[\nabla_h x]_i^2 + [\nabla_v x]_i^2 + \varepsilon}} \right)_{1 \le i \le n}.
\end{equation}

\end{document}